
\documentclass[]{interact}

\usepackage{epstopdf}
\usepackage[caption=false]{subfig}

\usepackage[numbers,sort&compress]{natbib}
\bibpunct[, ]{[}{]}{,}{n}{,}{,}
\makeatletter
\def\NAT@def@citea{\def\@citea{\NAT@separator}}
\makeatother

\theoremstyle{plain}
\newtheorem{theorem}{Theorem}[section]

\newtheorem{proposition}[theorem]{Proposition}

\theoremstyle{definition}

\theoremstyle{remark}
\newtheorem{remark}{Remark}

\usepackage{xcolor}
\definecolor{webgreen}{rgb}{0,.5,0}
\definecolor{webbrown}{rgb}{.6,0,0}
\definecolor{RoyalBlue}{cmyk}{1, 0.50, 0, 0}
\usepackage[colorlinks=true, breaklinks=true, urlcolor=webbrown, linkcolor=RoyalBlue, citecolor=webgreen,backref=page]{hyperref}
\usepackage{amsmath,amssymb,amsthm,upgreek}
\usepackage{enumerate}
\usepackage{verbatim}

\newcommand{\R}{{\mathbb R}}
\newcommand{\N}{{\mathbb N}}
\newcommand{\C}{{\mathbb C}}

\newcommand{\Z}{{\mathbb Z}}

\newcommand{\ic}{{\mathrm i}}
\newcommand{\dd}{{\mathrm d}}

\newcommand{\im}{\mathrm{Im}}
\newcommand{\re}{\mathrm{Re}}

\newcommand{\RS}{\boldsymbol{\mathfrak{R}}}

\newcommand{\z}	{{\boldsymbol z}}

\newcommand{\s}	{{\boldsymbol s}}

\newcommand{\qandq}{\quad \text{and} \quad}
\newcommand{\qasq}{\quad \text{as} \quad }
\newcommand{\qforq}{\quad \text{for} \quad }

\newcommand{\rhy}   {\textnormal{RHP}-${\boldsymbol Y}$}
\newcommand{\rht}   {\textnormal{RHP}-${\boldsymbol T}$}

\newcommand{\rhx}   {\textnormal{RHP}-${\boldsymbol X}$}
\newcommand{\rhn}   {\textnormal{RHP}-${\boldsymbol N}$}

\newcommand{\rhpc}   {\textnormal{RHP}-${\boldsymbol P}_{c}$}

\newcommand{\be}{\begin{equation}}
\newcommand{\ee}{\end{equation}}
\begin{document}

\articletype{}

\title{Strong Asymptotics of Jacobi-Type Kissing Polynomials}

\author{
\name{A.~B. Barhoumi\textsuperscript{a}\thanks{CONTACT A.~B. Barhoumi. Email: barhoumi@umich.edu}}
\affil{\textsuperscript{a}Department of Mathematical Sciences\\
	Indiana University-Purdue University Indianapolis\\
	402~North Blackford Street, Indianapolis, IN 46202}
}

\maketitle

\begin{abstract}
We investigate asymptotic behavior of polynomials \( p^{\omega}_n(z) \) satisfying varying non-Hermitian orthogonality relations
\[
\int_{-1}^{1} x^kp^{\omega}_n(x)h(x) e^{\ic \omega x}\dd x =0, \quad k\in\{0,\ldots,n-1\},
\]
where $h(x) = h^*(x) (1 - x)^{\alpha} (1 + x)^{\beta}, \ \omega = \lambda n, \ \lambda \geq 0 $ and $h(x)$ is holomorphic and non-vanishing in a certain neighborhood in the plane. These polynomials are an extension of so-called kissing polynomials ($\alpha = \beta = 0$) introduced in \cite{asheim12gaussian} in connection with complex Gaussian quadrature rules with uniform good properties in $\omega$. The analysis carried out here is an extension of what was done in \cite{celsus19, deano14}, and depends heavily on those works. 
\end{abstract}

\begin{keywords}
Non-Hermitian orthogonality, varying orthogonality, Riemann-Hilbert analysis
\end{keywords}

\section{Introduction}

The purpose of this note is to extend the work done in connection with complex quadrature rules for oscillatory integrals 
\[
\int_{-1}^{1} f(x) e^{\ic \omega x} \dd x.
\]
Evaluation of such integrals via the standard Gaussian quadratures can become extremely expensive numerically for large values of $\omega$, motivating the development of new quadrature rules. It was shown in \cite{asheim12gaussian} that using the zeros of polynomials $p_n^{\omega}$ which satisfy 
\be
\label{ortho-cond}
\int_{-1}^{1} x^kp^{\omega}_n(x) h(x) e^{\ic \omega x}\dd x =0, \quad k\in\{0,\ldots,n-1\},
\ee
where $h(x) = 1$ identically yields a quadrature rule with `good` properties that naturally reduces to the usual quadrature rule when $\omega \to 0$. For more on this and different computational methods, see the monograph by Dea\~no, Huybrechs, and Iserles \cite{DHI}. In this note, we will be interested in the asymptotic analysis of the polynomials $p_n^{\omega}$ arising in the slightly more general situation where 
\be
\label{h}
h(x) = h^*(x)(1 -x)^{\alpha} (1 + x)^{\beta},  \ \alpha, \beta > -1
\ee
and $h^*(z)$ is holomorphic in a certain region of the plane. 


\subsection*{Overview of the Paper}
Three regimes, separated by the geometry of the zero-attracting curve associated with $p_n^{\omega}$ (denoted $\gamma_{\lambda}$), are considered in this work. The main tool for the analysis carried out in all three regimes is the Riemann-Hilbert problem (RHP) for orthogonal polynomials and the Deift-Zhou nonlinear steepest descent method, where the initial RHP is transformed to a normalized RHP with the help of the so-called $g$-function, and a global parametrix and a set of local parametrices are constructed. The details of these constructions differ from one regime to the other, and depend on the geometry of $\gamma_{\lambda}$. With this in mind, Section \ref{geo} serves as a quick reminder of results pertaining to the zero-attracting curve associated with $p_n^{\omega}$ for all possible values of $\lambda \in [0, \infty)$. In Section \ref{statement} asymptotic formulas for $p_n^{\omega}(z)$ are stated for the subcritical, critical, and supercritical regimes for $z \in \C \setminus \gamma_{\lambda}$. Similar formulas can be obtained for $z \in \gamma_{\lambda}$, but such calculation is omitted for brevity. Proof of the formula for the supercritical case is provided in Section \ref{two-cut-pf}, and sketches of the proofs for the subcritical and critical regimes are provided in Sections \ref{one-cut-pf}, \ref{crit-pf}, respectively.\\

This work should be viewed as an extension of the work in \cite{deano14, celsus19}. Some of the main differences include analyzing polynomials $p_n^{\omega}$ in the critical case, allowing for more general weights, including ones with an algebraic singularities at the end-points $z = 1, z = -1$, and using a different construction of the global parametrix while analyzing the supercritical regime than the one studied in \cite{celsus19} (compare leading term in \eqref{2-cut-thm-formula} below with the one obtained in \cite[Theorem 2.4]{celsus19}). 
\section{Geometry}
\label{geo} 
Since the weight of orthogonality is complex-valued, it is known that the zeros of $p_n^{\omega}$ may not accumulate onto the interval $[-1, 1]$. It turns out that $[-1, 1]$ is the zero-attracting curve in the case where the value $\omega$ is fixed (see the appendix of \cite{deano14}). When $\omega$ is allowed to vary with $n$ as $\omega = \lambda n, \ \lambda \geq 0$, the situation becomes more interesting as we enter the world of varying orthogonality. The work of Gonchar and Rakhmanov \cite{GoncharRakhmanov89} suggested that one ought to consider a curve $\gamma_{\lambda}$ to which $[-1, 1]$ is deformable and satisfies the S-property:
\[
\dfrac{\partial\left(  U^{\mu_{\lambda}} + \re(V)\right)}{\partial {\bf n}^+} (z) = \dfrac{\partial \left(  U^{\mu_{\lambda}} + \re(V)\right)}{\partial {\bf n}^-} (z) \quad \forall z \in \gamma_{\lambda},
\]
where $U^{\mu_{\lambda_{\lambda}}}(z) := -\int \log|z - s| \dd \mu_{\lambda}(s)$ and $\mu_{\lambda}$ is the equilibrium measure on $\gamma_{\lambda}$ in the external field $\re(V)$ (in our setting, $V(z) = -\ic \lambda z$). They further show that such curves are formed by the trajectories of a quadratic differential $-Q_{\lambda}(z) (\dd z)^2$ where $Q_{\lambda}$ is given by 
\be
\label{q-kissing}
Q_{\lambda}(z) = \left(  \int \dfrac{\dd \mu_{\lambda}(s)}{s - z} + \dfrac{V'(z)}{2} \right)^2 = \left(  \int \dfrac{\dd \mu_{\lambda}(s)}{s - z} - \dfrac{\lambda \ic}{2} \right)^2.
\ee
To obtain a formula for $Q_{\lambda}$, it is common to assume something about the support of $\mu_{\lambda}$ to be proven later on. This was done by Dea\~no, who showed the following: define  
\begin{equation}
\label{w-1}
\varphi(z) := z + w(z), \quad w(z) = (z^2 - 1)^{1/2}, \ z\in \C \setminus \gamma_{\lambda}, \quad w(z) = z + \mathcal{O}(z) \qasq z \to \infty 
\end{equation} 
and let $\lambda_{cr}$ be the unique solution of 
\be
\label{lambda-crit}
2 \log \left( \dfrac{2 + \sqrt{\lambda_{cr}^2 + 4}}{\lambda_{cr}}  \right) - \sqrt{\lambda_{cr}^2 + 4} = 0 \quad (\lambda_{cr} \approx 1.325...).
\ee
The following theorem appeared in \cite{deano14}:
\begin{theorem}
	\label{kissing-geo-one-cut}
	Let $V(z) = -\ic \lambda z$ and $\lambda \in [0, \lambda_{cr})$. Then,
	\begin{enumerate}
		\item there exists a smooth curve $\gamma_{\lambda}$ connecting $z = 1$ and $z = -1$ that is a part of the level set 
		\(
		\re(\phi(z)) = 0 
		\)
		where 
		\be
		\label{phi-kiss}
		\phi(z) =  2\log \varphi(z) + \ic \lambda w(z). 
		\ee
		\item The measure 
		\(
		\dd \mu_{\lambda} (z) = -\dfrac{1}{2\pi \ic} \dfrac{2 + \ic \lambda z}{w(z)} \ \dd z
		\) is the equilibrium measure on $\gamma_{\lambda}$ in the external field $\re(V(z))$. 
		\item $\gamma_{\lambda}$ has the S-property in the field $\re(V(z))$. 
	\end{enumerate}
\end{theorem}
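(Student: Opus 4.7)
The plan is to organize all three assertions around the function $\phi$ defined in \eqref{phi-kiss}, whose derivative simplifies to $\phi'(z) = (2 + \ic\lambda z)/w(z)$. This single object ties together the geometry of $\gamma_\lambda$ (as a component of the level set $\{\re\phi = 0\}$), the density of $\mu_\lambda$ (as the jump of $\phi'$ across $\gamma_\lambda$), and the S-property (via the antisymmetry $\phi'_+ + \phi'_- = 0$). Throughout, $\phi$ is holomorphic on $\C \setminus \gamma_\lambda$ with square-root branch points at $z = \pm 1$ and a unique saddle of $\re\phi$ at $z^* = 2\ic/\lambda$.

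For part (1), three arcs of $\{\re\phi = 0\}$ emanate from each of $\pm 1$. At $\lambda = 0$ the arc joining $\pm 1$ is $[-1,1]$, and as $\lambda$ grows this arc deforms continuously so long as it does not collide with the saddle $z^*$. A direct calculation gives
\[
\re\phi(2\ic/\lambda) = 2\log\!\left(\frac{2 + \sqrt{\lambda^2 + 4}}{\lambda}\right) - \sqrt{\lambda^2+4},
\]
which vanishes precisely at $\lambda = \lambda_{cr}$ and is strictly positive on $[0,\lambda_{cr})$. Hence for all subcritical $\lambda$ a smooth simple arc $\gamma_\lambda \subset \{\re\phi = 0\}$ from $-1$ to $+1$ exists.

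For part (2), three things about the claimed $\mu_\lambda$ must be verified. First, total mass: using $w_+ = -w_-$, rewrite $\int_{\gamma_\lambda}\dd\mu_\lambda$ as half of a contour integral of $(2+\ic\lambda z)/w(z)$ around a tubular neighborhood of $\gamma_\lambda$, deform to infinity, and extract the residue from the expansion $(2+\ic\lambda z)/w(z) = \ic\lambda + 2/z + O(z^{-2})$ to obtain the value $1$. Second, positivity: since $\re\phi\equiv 0$ on $\gamma_\lambda$, the form $\phi'_+\dd z$ is purely imaginary there, so $\dd\mu_\lambda = -\frac{1}{2\pi}\dd(\im\phi_+)$ is a positive measure once $\gamma_\lambda$ is oriented so that $\im\phi_+$ decreases monotonically. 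Third, Euler-Lagrange: set $g(z) = \int_{\gamma_\lambda}\log(z-s)\dd\mu_\lambda(s)$ and prove the identity $\phi = 2g - V + C$. This follows because $g'(z) - \tfrac{1}{2}\phi'(z)$ has no jump across $\gamma_\lambda$, is regular at $\pm 1$ (the leading singular contributions of $g'$ and $\tfrac{1}{2}\phi'$ cancel), and tends to $-\ic\lambda/2$ at infinity, so it equals $-\ic\lambda/2$ identically; integration and matching $\log z$ asymptotics at infinity then fix $C$. Given this identity, $\re\phi = 0$ on $\gamma_\lambda$ becomes $2U^{\mu_\lambda}+\re V = \mathrm{const}$, which is the E-L condition for equilibrium on $\gamma_\lambda$.

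For part (3), the identity $\phi'_+ + \phi'_- = 0$ on $\gamma_\lambda$ is immediate from $\phi'(z) = (2+\ic\lambda z)/w(z)$ and $w_+ = -w_-$; combined with the identity $\phi = 2g - V + C$ derived above, a direct computation of the normal derivatives of $U^{\mu_\lambda} + \re V$ from both sides of $\gamma_\lambda$ reduces the stated S-property to this antisymmetry. The main obstacle lies in part (1): the global claim that the component of $\{\re\phi = 0\}$ joining $\pm 1$ persists as a single smooth simple arc throughout $[0,\lambda_{cr})$, with no other critical trajectory interfering before the saddle is reached, requires a careful continuity and compactness argument on the trajectory structure of the quadratic differential $-(\phi'(z))^2(\dd z)^2$. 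This is precisely where the critical parameter $\lambda_{cr}$ earns its name.
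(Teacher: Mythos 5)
First, note that the paper does not actually prove Theorem \ref{kissing-geo-one-cut}: it is quoted from Dea\~no's work (the sentence preceding the statement reads ``The following theorem appeared in \cite{deano14}''), so the only meaningful comparison is with that reference. Your outline is, in substance, the same strategy used there: everything is organized around $\phi$ with $\phi'(z)=(2+\ic\lambda z)/w(z)$, the saddle at $2\ic/\lambda$, the explicit evaluation of $\re\phi(2\ic/\lambda)$ reproducing \eqref{lambda-crit}, the identity $\phi=2g-V+\mathrm{const}$ yielding the Euler--Lagrange condition, and $\phi_+'+\phi_-'=0$ yielding the S-property. Your arguments for parts (2) and (3) are correct: the normalization via the residue at infinity of $(2+\ic\lambda z)/w(z)=\ic\lambda+2/z+\mathcal O(z^{-2})$, the positivity via $\dd\mu_\lambda=-\tfrac{1}{2\pi}\dd(\im\phi_+)$, and the reduction of the S-property to the antisymmetry of $\phi'_\pm$ are all sound.

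Two points need attention. First, a local count is wrong: $z=\pm1$ are \emph{simple poles} of $Q_\lambda(z)=(\phi'(z)/2)^2$, not simple zeros, so exactly one arc of $\{\re\phi=0\}$ emanates from each of $\pm1$ (since $\phi(z)\sim c(z\mp1)^{1/2}$ there), not three; three-arc branching occurs at a simple zero, and four arcs emanate from the double zero at $2\ic/\lambda$. This error is harmless --- indeed it simplifies your part (1), since the critical trajectory leaving $-1$ is then unique. Second, and more substantively, part (1) is the real content of the theorem and your proposal only gestures at it. Knowing that $\re\phi(2\ic/\lambda)>0$ for $\lambda<\lambda_{cr}$ rules out a collision with the saddle, but you must also exclude that the unique trajectory leaving $-1$ diverges to infinity (note $\re\phi(z)\approx 2\log|2z|-\lambda\im z$ has zero-level branches escaping to $\infty$ in the upper half-plane) or fails to terminate at $+1$ for some other reason. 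In \cite{deano14} this is handled by a continuity-in-$\lambda$ argument combined with the general structure theory of trajectories of quadratic differentials (absence of recurrent trajectories for this pole configuration). As written, your proposal correctly identifies this as the crux but does not supply the argument, so part (1) remains a gap rather than a proof.
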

\begin{remark}
In fact, Dea\~no's proof shows that for $\lambda = \lambda_{cr}$, $\gamma_{\lambda}$ is a union of two smooth curves that meet at $2\ic/\lambda_{cr}$.
\end{remark}
\begin{remark}
Observe that with this theorem in mind, one can calculate the function $Q_{\lambda}(z)$ via Privalov's lemma and \eqref{q-kissing} and find
\be
\label{q-form-kissing}
Q_{\lambda}(z) = \dfrac{1}{4}\dfrac{(2 + \ic \lambda z)^2}{z^2 - 1}.
\ee
\end{remark}
As for the supercritical case $\lambda \in (\lambda_{cr}, \infty)$, Celsus and Silva showed in \cite{celsus19} that (most of) the zeros of $p_{n}^{\omega}$ accumulate on two disconnected arcs, $ \gamma_{1}, \gamma_{2}$ (which depend on $\lambda$) that appear as trajectories of the quadratic differential $-Q_{\lambda}(z; x_*) (\dd z)^2$ where 
\be
\label{q-form-2-cut}
Q_{\lambda}(z; x) := -\dfrac{\lambda^2}{4} \dfrac{(z - z_{\lambda}(x))(z + \overline{z_{\lambda}(x)})}{z^2 - 1}, \qandq z_{\lambda}(x) = x + \dfrac{2 \ic}{\lambda},
\ee
and $x_*$ is some special value. More precisely, combining their work with Gonchar and Rakhmanov's, we deduce that the counting measure associated with $p_n^{\omega}$ weakly converges to $\mu_{\lambda}$. The density of $\mu_{\lambda}$ is given in the following theorem, due to Celsus and Silva \cite{celsus19}:
\begin{theorem}
	\label{2-cut-geo}
	Let $\lambda > \lambda_{cr}$ and define $Q_{\lambda}(z):= Q_{\lambda}(z, x_*)$,  where $x_*(\lambda) \in (0, 1)$ is the unique value for which 
	\(
	\re \left( \int_{z_{\lambda}(x_*)}^1 Q_{\lambda}(s) \dd s  \right) = 0
	\)
	and $\lim_{\lambda \to \infty}x_* (\lambda) = 1$. Then, there exist analytic arcs $\gamma_{1}, \gamma_{2}$ such that $\gamma_{1}$ is an arc connecting $-1$ to $-\overline{z_{\lambda}(x_*)}$ that lies in the left half-plane, $\gamma_{2}$ is the reflection of $\gamma_{1}$ across the imaginary axis, and they satisfy  
		\be
		\re \left(\int_{-1}^z Q_{\lambda}^{1/2}(s) \dd s \right) = 0 \quad \forall z \in \gamma_{1} \qandq \re \left(\int_{z_{\lambda}(x_*)}^z Q_{\lambda}^{1/2}(s) \dd s \right) = 0 \quad \forall z \in \gamma_{2}.
		\ee

	The equilibrium measure, $\mu_{\lambda}$ in the external field $\re(V)$ has the following density 
	\be
	\label{eq-meas-2-cut}
	\dd \mu_{\lambda}(s) = -\dfrac{1}{\pi \ic} Q_{\lambda}^{1/2}(s) \ \dd s, \quad s \in \gamma_1 \cup \gamma_2,
	\ee
	where we take the branch of $Q_{\lambda}^{1/2}$ holomorphic in $\C \setminus (\gamma_1 \cup \gamma_2)$ and behaves as $Q_{\lambda}^{1/2}(z) = \lambda \ic/2  + \mathcal O\left( z^{-1} \right)$ as $z \to \infty.$
\end{theorem}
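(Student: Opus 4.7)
The argument follows the strategy of Celsus and Silva \cite{celsus19}, combining an analysis of the quadratic differential $\varpi_x := -Q_{\lambda}(z;x)(\dd z)^2$ with the potential-theoretic characterization of the equilibrium measure via the Euler--Lagrange variational equations.

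The plan is first to fix the branch of $Q_{\lambda}^{1/2}(\cdot;x)$ as specified in the theorem statement: holomorphic off the eventual support, with behavior $\ic\lambda/2 + \mathcal{O}(z^{-1})$ at infinity. For each $x\in(0,1)$, $\varpi_x$ has simple zeros at $z_\lambda(x)$ and $-\overline{z_\lambda(x)}$, simple poles at $\pm 1$, and a double pole at infinity whose residue encodes $\lambda$. Because $Q_\lambda(z;x) = \overline{Q_\lambda(-\bar z;x)}$ for $x$ real, the trajectory structure is invariant under $z \mapsto -\bar z$, and this symmetry will automatically deliver the pairing between $\gamma_1$ and $\gamma_2$ required by the statement.

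The main analytic step is to locate the distinguished parameter $x_*$. I would introduce
\[
F(x) := \mathrm{Re}\int_{z_\lambda(x)}^{1} Q_\lambda^{1/2}(s;x)\,\dd s
\]
along any path in the right half-plane avoiding branch cuts, and argue via the intermediate value theorem that $F$ has a unique zero in $(0,1)$. As $x \to 1^-$, $z_\lambda(x)$ and $z=1$ can be joined by a short path along which $Q_\lambda^{1/2}$ has a definite sign, driving $F$ to one side of zero; at the other extreme, the assumption $\lambda > \lambda_{cr}$ rules out the one-cut configuration of Theorem~\ref{kissing-geo-one-cut} and forces $F$ to the opposite side. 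Uniqueness comes from strict monotonicity of $F$, derived by differentiating in $x$ and using the elementary form $z_\lambda(x) = x + 2\ic/\lambda$. That same monotonicity, together with the one-cut/two-cut transition at $\lambda = \lambda_{cr}$, yields the limit $x_*(\lambda)\to 1$ as $\lambda \to \infty$.

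Once $x_*$ is secured, the rest is essentially formal. The vanishing of $F(x_*)$ is precisely the condition that a horizontal trajectory of $\varpi_{x_*}$ emanating from $z_\lambda(x_*)$ terminates at the pole $z=1$; calling this trajectory $\gamma_2$ and letting $\gamma_1$ be its image under $z \mapsto -\bar z$ produces the asserted arcs. Positivity of the measure \eqref{eq-meas-2-cut} on $\gamma_1\cup\gamma_2$ follows from the horizontal-trajectory condition, and the mass-one normalization is a residue computation at infinity using the prescribed behavior of $Q_\lambda^{1/2}$. To identify this measure as the equilibrium measure in the field $\mathrm{Re}(V)$, one checks the Euler--Lagrange variational equations: constancy of $\mathrm{Im}\int Q_\lambda^{1/2}\,\dd s$ along each horizontal trajectory translates, via \eqref{q-kissing}, into constancy of $U^{\mu_\lambda}+\mathrm{Re}(V)$ on each arc, and equality of normal derivatives across each critical trajectory gives the S-property. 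The central obstacle is the existence-and-uniqueness of $x_*$; once that geometry is pinned down, the remaining verifications proceed along standard lines.
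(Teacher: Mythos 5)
The paper does not prove this theorem: it states it as a result of Celsus and Silva and cites \cite{celsus19} (the figure caption even says ``See \cite{celsus19} for a proof''). So there is no in-paper proof to compare against, and your proposal has to be judged on its own. The broad strategy you sketch---analyze the critical graph of $-Q_\lambda(\cdot\,;x)(\dd z)^2$, single out $x_*$ by the period condition, then verify Euler--Lagrange and the S-property---is indeed the route taken in \cite{celsus19}. (Incidentally, the condition defining $x_*$ in the paper's statement has a typo: the integrand should be $Q_\lambda^{1/2}$, as you correctly wrote in your $F$.)

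That said, there are genuine gaps. The weakest claim is $\lim_{\lambda\to\infty}x_*(\lambda)=1$: strict monotonicity of $F$ in $x$ and the one-cut/two-cut transition at $\lambda=\lambda_{cr}$ say nothing about the $\lambda\to\infty$ regime, which needs its own asymptotic analysis of $F(x;\lambda)$ (e.g.\ tracking how $z_\lambda(x)=x+2\ic/\lambda$ collapses to $x$ and how the period integral degenerates as $\lambda$ grows). A second substantive gap is the passage from $F(x_*)=0$ to the existence of analytic arcs $\gamma_1,\gamma_2$ with the stated global geometry: each simple zero of $Q_\lambda$ emits three critical trajectories, and one has to carry out a global analysis of the critical graph (Teichm\"uller's lemma to exclude recurrent trajectories, an accounting of which trajectories escape to $\infty$ via the double pole there) to conclude that exactly one trajectory from $z_\lambda(x_*)$ terminates at $z=1$ and stays in the right half-plane. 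Calling this ``essentially formal'' understates what is really the analytic heart of \cite{celsus19}. Finally, your IVT argument asserts definite signs of $F$ at the two ends of $(0,1)$ without computing either one; ruling out the one-cut configuration does not by itself produce a sign for $F$, and the monotonicity ``by differentiating in $x$'' likewise needs an explicit computation (the dependence on $x$ enters both through the endpoint $z_\lambda(x)$ and through $Q_\lambda(\cdot\,;x)$ itself).
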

\begin{figure}[ht!]
	\centering
	\includegraphics[width = 0.5 \textwidth]{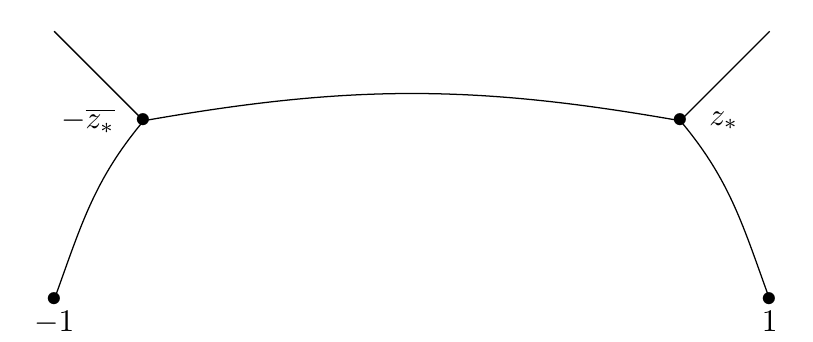}
	\caption{Schematic representation of critical graph of $-Q_{\lambda}(z) \ (\dd z)^2$ in the supercritical regime near $z = -1, z = 1$, with $z_* := z_{\lambda}(x_*)$. See \cite{celsus19} for a proof.}
	\label{fig:2-cut-kiss-graph}
\end{figure}

\section{Statement of Results}
\label{statement}

\subsection{Asymptotics: One-cut Case}

Let $\lambda_{cr}$ be as in \eqref{lambda-crit}. In the non-critical case ($\lambda < \lambda_{cr}$), the situation was described completely for $h(x) = 1$ identically in \cite{deano14}. To extend this result to $h(x)$ as in \eqref{h}, we need the following Szeg\H{o} function
\begin{equation}
\label{szego}
S_h(z) := \exp \left\{ \dfrac{w(z)}{2 \pi \ic}  \int_{\gamma_{\lambda}} \dfrac{\log[(w_+ h)(x) ]}{z - x} \dfrac{\dd x}{w_+(x)} \right\}, \quad z \in \C \setminus \gamma_{\lambda},
\end{equation}
where $w$ is as in \eqref{w-1} and $h^*(z)$ is holomorphic in a neighborhood containing the compact set delimited by $\gamma_{\lambda} \cup [-1, 1]$. Properties of $S_h$ will be discussed in Section \ref{one-cut-pf}.
\begin{theorem}[{\bf Subcritical Case $\lambda < \lambda_{cr}$}]
	\label{one-cut-thm}
	Let $0 \leq \lambda < \lambda_{cr}$ and $h(z)$ be as above. Then for $n$ large enough, polynomials $p_n^{\omega}$ have degree exactly $n$ and locally uniformly for $z \in \C \setminus \gamma_{\lambda}$ 
	\begin{equation}
	\label{oc-p-asymp}
	p_n^{\omega}(z) = \left(\dfrac{\varphi(z)}{2} \right)^n \exp \left(-\dfrac{\ic n \lambda}{2 \varphi(z)}\right) \left(\dfrac{S_h(\infty)}{S_h(z)} + \mathcal{O}\left(n^{-1}\right)\right) \qasq n \to \infty.
	\end{equation}
\end{theorem}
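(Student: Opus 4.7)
The plan is to carry out the Deift--Zhou nonlinear steepest descent analysis of the Fokas--Its--Kitaev RHP for $p_n^\omega$, closely following the template of \cite{deano14}. Because $h^*$ is holomorphic in a neighborhood of the compact region bounded by $[-1,1] \cup \gamma_\lambda$, the orthogonality relations \eqref{ortho-cond} may be rewritten as contour integrals along $\gamma_\lambda$, and one sets up \rhy\ on $\gamma_\lambda$ with jump whose upper-right entry is $h(z) e^{\ic \omega z}$. The first transformation $Y\mapsto T$ uses the $g$-function of $\mu_\lambda$ from Theorem~\ref{kissing-geo-one-cut} together with the Robin constant to normalize at infinity; the $S$-property and \eqref{q-form-kissing} are exactly what make the jump on $\gamma_\lambda$ admit the standard factorization. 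The second transformation $T\mapsto S$ opens a lens around $\gamma_\lambda$ whose sides lie on trajectories of $-Q_\lambda(\dd z)^2$, so that the new jumps off $\gamma_\lambda$ decay to the identity at geometric rate in $n$ away from $\pm 1$.

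Outside shrinking disks around $\pm 1$, $S$ is approximated by the global parametrix $N$, the solution of a constant-jump RHP on $\gamma_\lambda$. The Szeg\H{o} function $S_h$ of \eqref{szego} is built so that conjugating the usual square-root model (expressed via $\varphi$ and $w$) by $S_h^{\sigma_3}$ absorbs the scalar factor coming from $h$ and normalizes $N$ at infinity. Near each endpoint the equilibrium density has a square-root singularity and the weight carries the algebraic factor $(1-z)^\alpha$ at $z=1$ and $(1+z)^\beta$ at $z=-1$, so the local parametrices are of Bessel type, built from modified Bessel functions of orders $\alpha$ and $\beta$ respectively, and matched to $N$ on the boundary of each disk with error of order $n^{-1}$. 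Setting $R := S\cdot(\text{parametrix})^{-1}$ gives a small-norm RHP with jumps $I + \mathcal{O}(n^{-1})$, hence $R = I + \mathcal{O}(n^{-1})$ uniformly by standard Cauchy estimates. Unraveling the chain $Y\mapsto T\mapsto S\mapsto R$ and reading off the $(1,1)$ entry produces \eqref{oc-p-asymp}: a short residue computation based on $Q_\lambda^{1/2}(z) = (2+\ic\lambda z)/(2w(z))$ gives $g(z) = \log(\varphi(z)/2) - \ic\lambda/(2\varphi(z)) + \mathrm{const}$, which accounts for the prefactor $(\varphi(z)/2)^n e^{-\ic n\lambda/(2\varphi(z))}$, while $S_h(\infty)/S_h(z)$ emerges as the $(1,1)$ entry of $N$.

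The main technical obstacle, relative to \cite{deano14}, is the endpoint analysis. There $h\equiv 1$ and $\alpha=\beta=0$, so an Airy parametrix suffices; here the algebraic factors in the weight force the use of Bessel parametrices, which must be set up in local coordinates on the arc $\gamma_\lambda$ rather than on the real line. This requires choosing a conformal map sending a neighborhood of $\pm 1$ in $\gamma_\lambda$ onto a neighborhood of the origin, together with consistent branches of $w$, $\varphi$, and $S_h$ so that the Bessel model jumps match those of $S$ inside each disk. Verifying this matching---in particular that $S_h$ defined via the integral over the complex contour $\gamma_\lambda$ in \eqref{szego} has the correct boundary behaviour at $\pm 1$ to cancel the $(1-z)^{\alpha/2}$ and $(1+z)^{\beta/2}$ singularities produced by the Bessel parametrix---is the only nontrivial adaptation; once it is in place, the rest of the argument is routine.
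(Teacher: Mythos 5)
Your proposal follows the paper's sketch essentially step for step: RHP on $\gamma_\lambda$, normalization via the Dea\~no $g$-function (with $g(z) = \log(\varphi(z)/2) - \ic\lambda/(2\varphi(z))$ exactly as you derive it), lens opening, a Szeg\H{o}-conjugated global parametrix whose $(1,1)$ entry produces $S_h(\infty)/S_h(z)$, Bessel-type local parametrices from \cite{KMVaV01} at $\pm 1$ to handle general $\alpha,\beta>-1$, and the small-norm $R$-problem with $\mathcal{O}(n^{-1})$ jump estimates. One small correction: you assert that the lens boundaries lie on trajectories of $-Q_\lambda(z)\,(\dd z)^2$, but on such trajectories $\re\phi \equiv 0$, which is precisely where the off-diagonal lens jump does \emph{not} decay; the lens lips must instead be taken inside the open region where $\re\phi > 0$, which holds in a neighborhood of $\gamma_\lambda\setminus\{\pm1\}$ by subharmonicity of $\re\phi$ and the maximum principle (cf.\ the discussion after \eqref{r-jump}).
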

When $\lambda = \lambda_{cr}$, the geometry of $\gamma_{\lambda}$ changes. More precisely, $\gamma_{\lambda}$ is no longer an analytic arc, but rather a union of two analytic arcs, see \cite{deano14}. However, by slightly changing the analysis, we may still write an asymptotic formula for $p_n^{\omega}$. 
\begin{theorem}[{\bf Critical Case $\lambda = \lambda_{cr}$}]
	\label{crit-thm}
	Let $\lambda = \lambda_{cr}$ and $h(z)$ be as above. Then for $n$ large enough, polynomials $p_n^{\omega}$ have degree exactly $n$ and locally uniformly for $z \in \C \setminus \gamma_{\lambda}$ 
	\begin{equation}
	\label{crit-p-asymp}
	p_n^{\omega}(z) = \left(\dfrac{\varphi(z)}{2} \right)^n \exp \left(-\dfrac{\ic n \lambda}{2 \varphi(z)}\right) \left(\dfrac{S_h(\infty)}{S_h(z)} + \mathcal{O}\left(n^{-1/2}\right)\right) \qasq n \to \infty.
	\end{equation}
\end{theorem}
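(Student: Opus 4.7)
My plan is to follow the same Deift-Zhou nonlinear steepest descent procedure that underlies Theorem \ref{one-cut-thm}, with modifications dictated by the fact that at $\lambda=\lambda_{cr}$ the zero-attracting set $\gamma_{\lambda_{cr}}$ degenerates into two smooth analytic arcs meeting at the point $z_{0}:=2\ic/\lambda_{cr}$. First I would formulate the Fokas-Its-Kitaev \rhy\ on the contour $\gamma_{\lambda_{cr}}$, which is legitimate because $h^{*}$ is holomorphic in a neighborhood of the compact region bounded by $\gamma_{\lambda_{cr}}\cup[-1,1]$. The first transformation $\boldsymbol{Y}\mapsto \boldsymbol{T}$ uses the $g$-function $g(z)=\int\log(z-s)\dd\mu_{\lambda_{cr}}(s)$, with $\phi$ as in \eqref{phi-kiss}; Theorem \ref{kissing-geo-one-cut} (extended to $\lambda_{cr}$ via the cited remark) guarantees that $\re\phi=0$ on $\gamma_{\lambda_{cr}}$ and $\re\phi>0$ on the lens side in a punctured neighborhood of $\gamma_{\lambda_{cr}}\setminus\{z_{0}\}$, so opening lenses produces an \rhx-type problem $\boldsymbol{S}$ whose jumps off $\gamma_{\lambda_{cr}}$ tend uniformly and exponentially to $I$ away from the three special points $\{-1,1,z_{0}\}$.

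Next I would build the global parametrix $\boldsymbol{N}$ away from small disks $D_{\pm 1}$, $D_{z_{0}}$ exactly as in the subcritical case, using the Szeg\H{o} function $S_{h}$ from \eqref{szego} to dress the standard Szeg\H{o}-type outer solution; the leading term of \eqref{crit-p-asymp} reads off directly from this $\boldsymbol{N}$, matching the subcritical formula because the outer construction is structurally the same. Inside $D_{\pm 1}$ one installs Bessel parametrices $\boldsymbol{P}_{\pm 1}$ of order $\alpha,\beta$ respectively to absorb the algebraic factors $(1-x)^{\alpha}(1+x)^{\beta}$; these match $\boldsymbol{N}$ on $\partial D_{\pm 1}$ up to $\mathcal{O}(n^{-1})$ as usual.

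The decisive new step is the construction of a local parametrix $\boldsymbol{P}_{z_{0}}$ inside $D_{z_{0}}$. At $z_{0}$ the quadratic differential $-Q_{\lambda_{cr}}(z)(\dd z)^{2}$ has a zero that forces four trajectories of $\gamma_{\lambda_{cr}}$ plus the lens boundaries to emanate from $z_{0}$, and the density of $\mu_{\lambda_{cr}}$ vanishes there. The conformal map $\zeta(z)=\bigl(cn\,\phi(z)\bigr)^{1/2}$ (or a suitable power thereof, with branch dictated by $\phi$) rescales a neighborhood of $z_{0}$ to a neighborhood of $0$ in the $\zeta$-plane, and one solves explicitly the resulting constant-jump model RHP on the four rays; this can be carried out in elementary terms because all jumps on the model contour are triangular with constant entries. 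Matching $\boldsymbol{P}_{z_{0}}$ to $\boldsymbol{N}$ on $\partial D_{z_{0}}$ then produces a mismatch of size $n^{-1/2}$ (coming from the $\zeta^{-1}$ term of the expansion, which scales as $n^{-1/2}$ rather than the $n^{-1}$ obtained at regular endpoints), and this is what ultimately limits the global error in \eqref{crit-p-asymp}.

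Finally, the composite parametrix $\boldsymbol{P}$ produces the small-norm RHP for $\boldsymbol{R}:=\boldsymbol{S}\boldsymbol{P}^{-1}$ whose jump matrix is $I+\mathcal{O}(n^{-1/2})$ uniformly, the dominant contribution arising from $\partial D_{z_{0}}$. Standard estimates yield $\boldsymbol{R}=I+\mathcal{O}(n^{-1/2})$, and unwinding $\boldsymbol{R}\mapsto\boldsymbol{S}\mapsto\boldsymbol{T}\mapsto\boldsymbol{Y}$ on $\C\setminus\gamma_{\lambda_{cr}}$ gives \eqref{crit-p-asymp}. The main obstacle, as indicated, is the construction of $\boldsymbol{P}_{z_{0}}$: verifying that the conformal map straightening the lens contours near $z_{0}$ is well-defined and confirming that the explicit constant-jump model indeed has an elementary solution with the prescribed matching behavior requires careful bookkeeping of the four sectors around $z_{0}$ and of the signs of $\re\phi$ in each.
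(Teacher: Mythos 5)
Your global strategy coincides with the paper's: reuse the subcritical transformations $\boldsymbol Y\mapsto\boldsymbol T\mapsto\boldsymbol X$, the outer parametrix \eqref{n} built from the Szeg\H{o} function \eqref{szego}, the \cite{KMVaV01} parametrices at $\pm1$, and attribute the $\mathcal O(n^{-1/2})$ error to the matching at the corner $z^*=2\ic/\lambda_{cr}$. The gap is in the one step you yourself single out as decisive, the local parametrix at $z^*$, where your description of the model problem is wrong in ways that would derail the construction. First, the conformal coordinate cannot be a power of $\phi$ itself: $\phi(z^*)=\phi_c(z^*)$ is a nonzero purely imaginary constant, so one must work with $\phi_c(z)-\phi_c(z^*)$, which (because $z^*$ is a \emph{simple} zero of $Q_{\lambda_{cr}}^{1/2}$, hence a double zero of $\phi_c-\phi_c(z^*)$) admits a conformal square root $\zeta_c(z)=-(\phi_c(z)-\phi_c(z^*))^{1/2}$; the constant $\phi_c(z^*)\in\ic\R$ must then be tracked explicitly when verifying the matching condition. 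Second, the rescaled model is not ``a constant-jump RHP on four rays solvable in elementary terms.'' The jumps on the two arcs of $\gamma_{\lambda}$ entering $U_c$ are off-diagonal (not triangular), and they are removed beforehand by conjugating with the piecewise-constant matrix $\boldsymbol J$ and $r^{\sigma_3}$; what remains is a single jump line (the image of the lens boundary under $\zeta_c$, sent to $\R$), and the genuine obstruction is that on this line $e^{-n\phi}$ decays only like $e^{-n|z-z^*|^2}$, so the jump is not constant after rescaling but Gaussian in $\zeta=\sqrt{n/2}\,\zeta_c(z)$.

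The correct model is therefore the error-function parametrix $\boldsymbol C$ of \cite[Section 7.5.3]{bleher17topological}: after factoring out $e^{-\zeta^2\sigma_3}$ it has the constant jump $\left(\begin{smallmatrix}1&1\\0&1\end{smallmatrix}\right)$ on $\R$, is solved by a Cauchy integral of $e^{-2s^2}$ (i.e.\ by $\mathrm{erfc}$, not by elementary functions), and has an asymptotic expansion in odd negative powers of $\zeta$. It is precisely the leading $\zeta^{-1}$ term of that expansion, with $\zeta\sim\sqrt{n}$ on $\partial U_c$, that produces the $\mathcal O(n^{-1/2})$ mismatch you anticipate; so your error bookkeeping is right, but the model problem you propose to solve is not the one that actually arises, and as stated it would not match the $\zeta$-dependent lens jump near $z^*$. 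With the erfc model substituted for your four-ray model, the rest of your argument (small-norm problem for $\boldsymbol R$, unwinding the transformations) goes through as in the paper.
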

We sketch a proof of Theorems \ref{one-cut-thm}, \ref{crit-thm} in Sections \ref{one-cut-pf}, \ref{crit-pf}, respectively.

\subsection{Asymptotics: Two-cut Case}
To present the results when $\lambda > \lambda_{cr}$, we construct the main term of the asymptotics using the approach of \cite{ApY15} relying on Theta functions, instead of the meromorphic differential approach taken in \cite{celsus19}. We introduce those here. Let $z_* = z_{\lambda}(x_*)$ (see Theorem \ref{2-cut-geo}) and
\begin{equation}
\label{gamma-kissing}
\gamma(z):=\left(\frac{z+\overline{z_*}}{z  -z_*}\frac{z-1}{z+1}\right)^{1/4}, \quad z\in\overline\C\setminus (\gamma_1 \cup \gamma_2),
\end{equation}
where $\gamma(z)$ is holomorphic off $\gamma_1 \cup \gamma_2$ and the branch is chosen so that $\gamma(\infty)=1$. Further, set
\begin{equation}
\label{ab-kissing}
A(z) = \frac{\gamma(z)+\gamma^{-1}(z)}2 \qandq B(z) := \frac{\gamma(z)-\gamma^{-1}(z)}{-2\ic}.
\end{equation}
The functions \( A(z) \) and \( B(z) \) are holomorphic in \( \overline\C\setminus (\gamma_1 \cup \gamma_2) \), $A(\infty)=1, \quad B(\infty)=0$, and 
\begin{equation}
\label{nt3}
A_\pm(s) = \pm B_\mp(s), ~~ s \in (\gamma_1 \cup \gamma_2)\setminus \{ \pm 1, z_*, -\overline{z_*} \}.
\end{equation}
\subsubsection{Riemann Surface}
\label{riemann-surface}
Let $\RS$ be the Riemann surface associated with the algebraic equation $y^2 = Q_{\lambda}(z)$, with $Q_{\lambda}$ as in Theorem \ref{2-cut-geo}. This surface is realized as two copies of $\C$ cut along $\gamma_{1, 2}$ and glued together in such a way that the right side of $\gamma_{i}$ on $\RS^{(0)}$, the first sheet, is connected with the left side of the same arc on the second sheet, $\RS^{(1)}$. Furthermore, $\pi: \RS \to \overline{\C}$ be the natural projection. We will denote points on the surface with boldface symbols $\boldsymbol z, \boldsymbol t, \boldsymbol s$ and their projections by regular script $z, s, t$ and \( F^{(i)}(z) \), \( i\in\{0,1\} \), stands for the pull-back under \( \pi(\z) \) of a function \( F(\z) \) from \( \RS^{(i)} \) into \( \overline\C\setminus (\gamma_1 \cup \gamma_2) \). Note that for a fixed $z \in \C \setminus (\gamma_{1} \cup \gamma_2)$, the set $\pi^{-1}(z)$ contains exactly two elements, one on each sheet, and we denote by $z^{(k)}$ the unique point satisfying $z^{(k)} \in \pi^{-1}(z) \cap \RS^{(k)}$.

Denote by \( \boldsymbol\alpha \) a cycle on \( \RS \) that passes through \( \pi^{-1}(-\overline{z_*}) \) and \( \pi^{-1}(z_*) \) and whose natural projection is the arc $\hat{\gamma}$ that smoothly meets $\gamma_1, \gamma_2$ at $z_*, -\overline{z_*}$, belongs to the region delimited by infinite trajectories in Figure \ref{fig:2-cut-kiss-graph}, and agrees with the orthogonal trajectory of $-Q(z) (\dd z)^2$ in a small neighborhood of $z_*, -\overline{z_*}$. We assume that \( \pi(\boldsymbol\alpha)\cap (\gamma_1 \cup \gamma_2)=\{z_*,-\overline{z_*}\} \) and orient \( \boldsymbol\alpha \) towards \( \boldsymbol -\overline{z_*} \) within \( \RS^{(0)} \). Similarly, we define \( \boldsymbol\beta \) to be a cycle on \( \RS \) that passes through \( \pi^{-1}(-1) \) and \( \pi^{-1}(-\overline{z_*}) \) and whose natural projection is $\gamma_1$. We orient \( \boldsymbol\beta \) so that \( \boldsymbol\alpha,\boldsymbol\beta \) form the right pair at \( \pi^{-1}(-\overline{z_*}) \).

Since this is a surface of genus 1, the linear space of holomorphic differentials is of dimension 1, and is generated by (we slightly abuse the notation $w$ here)
\begin{align}
\label{hol-diff-kiss}
& \mathcal{H}(\z) := \left(\oint_{\boldsymbol \alpha} \dfrac{\dd t}{w(\boldsymbol t)} \right)^{-1} \dfrac{\dd z}{w(\boldsymbol z)},
\end{align}
where 
\begin{align}
w(z^{(k)}) &= (-1)^k \left[(z^2 - 1)(z - z_*)(z + \overline{z_*}) \right]^{1/2}(z), \ z \in \C \setminus (\gamma_{1} \cup \gamma_2), \label{w-2} \\
w(z^{(k)}) &= (-1)^kz^2 + \mathcal{O}(z) \qasq z \to \infty.
\end{align}
$\mathcal{H}$ is normalized so that $\oint_{\boldsymbol \alpha} \mathcal H = 1$, and under this normalization, Riemann showed that 
\be
\label{B-kiss}
\im(\mathsf B) > 0, \quad \text{where } \quad \mathsf B := \oint_{\boldsymbol \beta} \mathcal H.
\ee
Given this normalized differential, we can define the Abel Map $\mathcal A (\z):= \int_{1}^{\z} \mathcal H$ where the path of integration is chosen to lie in $\RS_{\boldsymbol \alpha, \boldsymbol \beta}:=\RS \setminus \{ \boldsymbol \alpha, \boldsymbol \beta \}$. This function is holomorphic on $\RS_{\boldsymbol \alpha, \boldsymbol \beta}$ that satisfies 
\be
\label{abel-jumps}
(\mathcal A_+ - \mathcal A_-)(\z) = \left\{ \begin{array}{ll}
	1, & \z \in \boldsymbol \beta \setminus \pi^{-1}(-1),\\ 
	-\mathsf B, & \z \in \boldsymbol \alpha \setminus \pi^{-1}(-1).
\end{array}\right.
\ee

\subsubsection{Szeg\H{o} Function}
Let 
\be
\label{szego-kiss}
\tilde{S}_h(z^{(k)}) := \exp \left\{ \dfrac{1}{4 \pi \ic} \oint_{\pi^{-1}(\gamma_1 \cup \gamma_2)} \log (h) \Omega_{z^{(k)}, z^{(1-k)}} \right\} \qforq k = 0, 1,
\ee
where $w$ is as in \eqref{w-2} and $\Omega_{z^{(k)}, z^{(1-k)}}$ is the meromorphic differential on $\RS$ with simple pole at $z^{(k)}, z^{(1 - k)}$ with residues $1, -1$, respectively and $\int_{\boldsymbol \alpha} \Omega_{z^{(k)}, z^{(1 - k)}} = 0$.
\begin{proposition}
	\label{prop:szego-kiss}
	Let $\tilde{S}_h$ be as above and $h(z) = h^*(z) (1 - z)^{\alpha}(1 + z)^{\beta}$ where $h^*(z)$ is holomorphic, non-vanishing in a neighborhood of $\gamma_1 \cup \gamma_{2} \cup \hat \gamma$ and $h(z)$ is holomorphic in a neighborhood of each point of $(\gamma_1 \cup \gamma_2) \setminus \{\pm 1, z_*, -\overline{z_*} \}$. Furthermore, define 
	\be
	\label{kiss-c-h}
	c_h = c_h(\lambda):= \dfrac{1}{2\pi \ic} \oint_{\pi^{-1}(\gamma_1 \cup \gamma_2)} \log(h) \mathcal{H}.
	\ee
	Then $\tilde{S}_h$ is holomorphic and non-vanishing on $\RS \setminus \{\boldsymbol \alpha, \pi^{-1}(\gamma_1 \cup \gamma_2)\}$ and satisfies the relation $\tilde{S}_h(z^{(k)}) \cdot \tilde{S}_h(z^{(1 - k)}) = 1$ identically. Furthermore, $\tilde{S}_{h}$ possesses continuous traces on $\boldsymbol \alpha \cup \pi^{-1}(\gamma_1 \cup \gamma_2) \setminus \{\pi^{-1}(\pm 1) \pi^{-1}(z_*), \pi^{-1}(-\overline{z_*})\}$ that satisfy 
	\be
	\label{szego-kiss-jump}
	\tilde{S}_{h, +}(\s) = \tilde{S}_{h, -}(\s) \left\{ \begin{array}{ll}
		e^{2\pi \ic c_h}, & s \in \boldsymbol \alpha \setminus \{ z_*, -\overline{z_*}\},\\
		1/h(s), & s \in \pi^{-1}(\gamma_{1} \cup \gamma_2)\setminus \{ \pi^{-1}(\pm 1) \}.
	\end{array}\right.
	\ee
	Furthermore, we have 
	\(
	\tilde{S}_h(z^{(0)}) = |z - e|^{-\alpha_e/ 2}, \quad e \in \{ \pm 1, z_*, -\overline{z_*} \}  ,
	\)
	where $\alpha_e = 0$ for $e = z_*, -\overline{z_*}$, $\alpha_e = \alpha$ when $e = 1$ and $\alpha_e = \beta$ when $e = -1$. 
\end{proposition}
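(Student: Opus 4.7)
My plan is to verify each claim about \(\tilde{S}_h\) in turn, exploiting the defining features of the third-kind differential \(\Omega_{z^{(k)}, z^{(1-k)}}\) on \(\RS\): simple poles of residues \(+1, -1\) at \(z^{(k)}, z^{(1-k)}\) respectively, holomorphic elsewhere, and vanishing \(\boldsymbol\alpha\)-period. The multiplicative symmetry \(\tilde{S}_h(z^{(k)})\,\tilde{S}_h(z^{(1-k)}) = 1\) is immediate from the antisymmetry \(\Omega_{z^{(k)}, z^{(1-k)}} = -\Omega_{z^{(1-k)}, z^{(k)}}\): swapping the two points exchanges the residues \(+1\) and \(-1\), while the \(\boldsymbol\alpha\)-period normalization is symmetric under this swap, so the two defining integrals are negatives of each other. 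Non-vanishing is automatic from the exponential. For holomorphy on \(\RS \setminus (\boldsymbol\alpha \cup \pi^{-1}(\gamma_1 \cup \gamma_2))\), I would use the explicit representation \(\Omega_{z^{(0)}, z^{(1)}}(\boldsymbol t) = w(z^{(0)})\,w(\boldsymbol t)^{-1}(t - z)^{-1}\,\dd t + c(z)\,\mathcal{H}(\boldsymbol t)\), where \(c(z)\) is fixed by \(\oint_{\boldsymbol\alpha}\Omega = 0\); the right-hand side depends holomorphically on \(z^{(0)}\) provided the pole avoids the integration contour and \(z^{(0)}\) does not cross \(\boldsymbol\alpha\).

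The jumps in \eqref{szego-kiss-jump} I would derive as follows. For \(\s \in \pi^{-1}(\gamma_1 \cup \gamma_2)\) away from the branch points, Plemelj-Sokhotski on \(\RS\) supplies the discontinuity: as \(z^{(0)}\) crosses the contour, both poles of \(\Omega_{z^{(0)}, z^{(1)}}\) traverse \(\pi^{-1}(\gamma_1 \cup \gamma_2)\) but in opposite orientations on the two sheets, and their residues \(\pm 1\) combine with these opposite orientations to produce additive contributions summing to \(-\log h(\pi(\s))\). The prefactor \(1/(4\pi\ic)\) in \eqref{szego-kiss} is tailored precisely to this doubling, yielding \(\tilde{S}_{h,+}/\tilde{S}_{h,-} = 1/h\). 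For \(\s \in \boldsymbol\alpha\) away from the preimages of \(z_*, -\overline{z_*}\), the normalization \(\oint_{\boldsymbol\alpha}\Omega = 0\) is threatened when \(z^{(0)}\) crosses \(\boldsymbol\alpha\); a compensating shift of \(\Omega_{z^{(0)}, z^{(1)}}\) by the appropriate multiple of \(\mathcal{H}\) restores it, and inserting this shift into the defining integral, together with \eqref{kiss-c-h}, produces the stated jump \(e^{2\pi\ic c_h}\).

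For the endpoint behavior \(\tilde{S}_h(z^{(0)}) = |z - e|^{-\alpha_e/2}\) at \(e \in \{\pm 1, z_*, -\overline{z_*}\}\), I would localize the integral to a small neighborhood of \(e\). When \(e \in \{z_*, -\overline{z_*}\}\), \(h = h^*\) is holomorphic and non-vanishing there, so \(\log h\) is bounded and the integral is bounded, giving \(\alpha_e = 0\). When \(e \in \{\pm 1\}\), I would extract the logarithmic singularity \(\alpha_e \log(z - e)\) of \(\log h\) (\(\alpha_e = \alpha\) at \(e = 1\), \(\alpha_e = \beta\) at \(e = -1\)) and pair it against the local form of \(\Omega_{z^{(0)}, z^{(1)}}\) near the branch point above \(e\), where the two poles coalesce in the local coordinate \(\sqrt{z - e}\); the resulting integral contributes \(-\frac{\alpha_e}{2}\log|z - e|\) to \(\log \tilde{S}_h\), matching the claimed asymptotic.

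The main obstacle is the \(\boldsymbol\alpha\)-cycle jump. Accounting precisely for the shift of \(\Omega_{z^{(0)}, z^{(1)}}\) as \(z^{(0)}\) crosses \(\boldsymbol\alpha\) requires Riemann's bilinear relations, which express \(\oint_{\boldsymbol\beta}\Omega_{z^{(0)}, z^{(1)}}\) via \(\mathcal{A}(z^{(0)}) - \mathcal{A}(z^{(1)})\) together with the Abel-map jump \eqref{abel-jumps}. This bookkeeping is what is needed to identify the correct multiple of \(\mathcal{H}\) and to confirm that the prefactor \(1/(4\pi\ic)\) yields precisely \(2\pi\ic c_h\) rather than some factor-of-two variant.
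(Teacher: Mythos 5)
The paper does not prove this proposition itself; it simply refers the reader to \cite[Section~6.1]{ApY15}. Your outline follows the same standard route used there --- antisymmetry of the third-kind differential for the reciprocity relation, the explicit \( w(z^{(0)})w(\boldsymbol t)^{-1}(t-z)^{-1}\dd t + c(z)\mathcal H \) representation for holomorphy, Plemelj on the glued cuts for the \( 1/h \) jump (with the two residues \( \pm 1 \) reinforcing rather than cancelling, which is what produces the \( 4\pi\ic \) in the prefactor), Riemann's bilinear relation to pin down the \( \boldsymbol\alpha \)-jump as \( e^{2\pi\ic c_h} \), and local analysis near the branch points --- so it is correct as a proof sketch and matches the cited argument.
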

For a proof of this, see \cite[Section 6.1]{ApY15}. 
\subsubsection{Theta Function}
\label{theta-kiss}
Let $\theta(z)$ be the function defined by the sum
\(
\theta(u) = \sum_{k \in \Z} \exp \left\{ \pi \ic \mathsf B k^2 + 2\pi \ic u k  \right\}.
\)
For convenience, we remind the reader of its properties here. This function is holomorphic in $\C$ and satisfies the quasi-periodicity relations
\be
\theta(u+j+\mathsf B m) = \exp\left\{-\pi\ic \mathsf B m^2-2\pi\ic um\right\}\theta(u), \qquad j,m\in\Z.
\ee
It is also known that $\theta(u)$ vanishes only at the points of the lattice \( \frac{\mathsf B+1}2 + m + n \mathsf B, \ m, n \in \Z \). Furthermore, let $\tilde{\mathcal A}$ denote the continuation of $\mathcal A$ onto $\boldsymbol \alpha, \boldsymbol \beta$ by $\mathcal{A}_+$ and define $\z_{n, k}$ by the equation
\be
\label{jip}
\tilde{\mathcal A}(\z_{n, k}) = \tilde{\mathcal A}\left(p^{(k)} \right) + c_h + n\left( \dfrac{1}{2} + \mathsf B \tau \right) + j_{n, k} + m_{n,k} \mathsf B, \ j_{n, k}, m_{n,k} \in \Z,
\ee
where $p = \ic \im(z_*)/(1 - \re(z_*)))$ and 
\be
\label{kissing-periods}
\tau  := -\dfrac{1}{\pi \ic}\int_{\hat{\gamma}} Q_{\lambda}^{1/2}(s) \dd s.
\ee 
Since $\RS$ is of genus one, $\mathcal A$ is bijective and equation \eqref{jip} defines $\z_{n, k}$ uniquely. In fact, by considering the branch choices in the definition of $A, B$, the following holds.
\begin{proposition}
	\label{prop:Nt}
	Let \( \tau \) be given by \eqref{kissing-periods}, \( \z_{n,k}=\z_{n,k}(\lambda) \) as in \eqref{jip}, and $p$ as above. Then for any subsequence \( \N_* \) the point \( \infty^{(0)} \) is a topological limit point of \( \{\z_{n,1}\}_{n\in \N_*} \) if and only if \( \infty^{(1)} \) is a topological limit point of \( \{\z_{n,0}\}_{n\in \N_*} \). 
\end{proposition}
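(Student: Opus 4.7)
The plan is to use that the Abel map $\tilde{\mathcal A}$ induces a bijection from $\RS$ onto the Jacobian $\C/(\Z + \mathsf B\Z)$, so that topological convergence of a sequence of points on $\RS$ is equivalent to convergence of their Abel images modulo periods. Subtracting \eqref{jip} for $k=0$ from the same equation with $k=1$ cancels every $n$-dependent term, yielding
\[
\tilde{\mathcal A}(\z_{n,0}) - \tilde{\mathcal A}(\z_{n,1}) \equiv \tilde{\mathcal A}(p^{(0)}) - \tilde{\mathcal A}(p^{(1)}) \pmod{\Z + \mathsf B\Z}.
\]
Since the basepoint of $\tilde{\mathcal A}$ is the branch point $\boldsymbol 1$ and $\mathcal H$ is anti-invariant under the hyperelliptic involution $\iota\colon\z^{(k)}\mapsto\z^{(1-k)}$, the standard identity $\tilde{\mathcal A}(\iota(\z)) \equiv -\tilde{\mathcal A}(\z) \pmod{\Z + \mathsf B\Z}$ holds, and the right side above simplifies to $2\tilde{\mathcal A}(p^{(0)})$.

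The crux is to verify $2\bigl(\tilde{\mathcal A}(\infty^{(0)}) + \tilde{\mathcal A}(p^{(0)})\bigr) \equiv 0 \pmod{\Z + \mathsf B\Z}$, for then the previous relation rewrites as
\[
\tilde{\mathcal A}(\z_{n,0}) - \tilde{\mathcal A}(\infty^{(1)}) \equiv \tilde{\mathcal A}(\z_{n,1}) - \tilde{\mathcal A}(\infty^{(0)}) \pmod{\Z + \mathsf B\Z},
\]
matching the two sequences' distances to $\infty^{(1)}$ and $\infty^{(0)}$ on the Jacobian. By Abel's theorem the required congruence is equivalent to producing a meromorphic function on $\RS$ with divisor $(p^{(0)})+(\infty^{(0)})-(p^{(1)})-(\infty^{(1)})$. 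I would take the lift to $\RS$ of $(\gamma^2-1)/(\gamma^2+1)$: the fourth-root normalization fixed by $\gamma(\infty)=1$ in \eqref{gamma-kissing} makes $\gamma^2$ meromorphic on $\RS$ with simple zeros at $\boldsymbol 1$ and $\boldsymbol{-\overline{z_*}}$, simple poles at $\boldsymbol{-1}$ and $\boldsymbol{z_*}$, and values $\gamma^2(\infty^{(0)})=+1$, $\gamma^2(\infty^{(1)})=-1$. Since $p$ is by construction the unique finite root of $\gamma^2=1$ coming from \eqref{ab-kissing}, $\gamma^2-1$ vanishes precisely at $p^{(0)}$ and $\infty^{(0)}$ while $\gamma^2+1$ vanishes precisely at $p^{(1)}$ and $\infty^{(1)}$; the poles at $\boldsymbol{-1}$ and $\boldsymbol{z_*}$ common to numerator and denominator cancel in the ratio, yielding the desired divisor.

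The main obstacle I anticipate is the careful book-keeping of branches, in particular verifying that $\gamma^2$ really takes opposite signs at the two points over infinity. This rests on both the fourth-root convention in \eqref{gamma-kissing} and the fact that $\gamma^4$ is a rational function with $\gamma^4(\infty)=1$, so that the two branches of $\sqrt{\gamma^4}$ at $\infty$ are $\pm 1$. Once this divisor computation is in place, the remainder of the proof is the two algebraic manipulations displayed above followed by transporting convergence from the Jacobian back to $\RS$ via the bijectivity of $\tilde{\mathcal A}$ recorded in Section~\ref{riemann-surface}.
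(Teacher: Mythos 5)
Your proposal is correct and is essentially the paper's own argument: the function $(\gamma^2-1)/(\gamma^2+1)$ you construct equals $-\ic\,(B/A)$ on $\RS^{(0)}$ and continues to $\ic$ times the paper's rational function \eqref{nt4} on $\RS^{(1)}$, so both proofs apply Abel's theorem to the same principal divisor $(p^{(0)})+(\infty^{(0)})-(p^{(1)})-(\infty^{(1)})$, subtract the two relations \eqref{jip}, and conclude by the bijectivity of the Abel map (unique solvability of the Jacobi inversion problem) in genus one. Your detour through the involution identity $\tilde{\mathcal A}(\iota\z)\equiv-\tilde{\mathcal A}(\z)$ is harmless but unnecessary, since the divisor identity already gives \eqref{nt5} directly.
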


\begin{proof}
	
	It follows from \eqref{nt3} and choice of the branch of $(\cdot)^{1/4}$ that $\gamma(p) = 1$ and
	\begin{equation}
	\label{nt4}
	\left\{
	\begin{array}{rl}
	(B/A)(z), & \z\in \RS^{(0)}, \medskip \\
	-(A/B)(z), & \z\in \RS^{(1)},
	\end{array}
	\right.
	\end{equation}
	is a rational function on \( \RS \) with two simple zeros $\infty^{(0)}$ and $p^{(0)}$ and two simple poles $\infty^{(1)}$ and $p^{(1)}$ (if it happens that \( p\in (\gamma_1 \cup \gamma_2)\setminus \{\pm 1, z_*, -\overline{z_*}\} \), then we choose \( p^{(0)}\in\RS \) precisely in such a way that it is a zero of \eqref{nt4} and \( p^{(1)} \) so it is a pole of \eqref{nt4}; it is, of course, still true that these points are distinct and \( \pi\big(p^{(k)}\big) = p \)). Therefore, Abel's theorem yields that
	\begin{equation}
	\label{nt5}
	\int_{p^{(0)}}^{\infty^{(1)}}\mathcal H  = \int_{p^{(1)}}^{\infty^{(0)}}\mathcal H \quad \text{ modulo } \Z + \mathsf B \Z, 
	\end{equation}
	while the relations \eqref{jip}, in particular, imply that
	\begin{equation}
	\label{nt6}
	\int_{p^{(0)}}^{\z_{n,0}}\mathcal H  =\int_{p^{(1)}}^{\z_{n,1}}\mathcal H \quad \text{ modulo } \Z + \mathsf B \Z.
	\end{equation}
	Let $\z_k$ be a topological limit of a subsequence $\{\z_{n_i,k}\}$. Holomorphy of the differential \( \mathcal H \) implies that
	\[
	\int_{p^{(k)}}^{\z_{n_i,k}} \mathcal H = \int_{p^{(k)}}^{\z_k} \mathcal H + \int_{\z_k}^{\z_{n_i,k}} \mathcal H \to \int_{p^{(k)}}^{\z_k} \mathcal H
	\]
	as \( i\to\infty \), where the integral from \( \z_k \) to \( \z_{n_i,k} \) is taken along the path that projects into a segment joining \( z_k \) and \( z_{n_i,k} \). The desired claim now follows from \eqref{nt5}, \eqref{nt6}, and the unique solvability of the Jacobi inversion problem on \( \RS \).
\end{proof}

Now, we define 
\begin{equation}
\label{ab5}
\Theta_{n,k}(\z) = \exp\left\{-2\pi\ic\big(m_{n,k}+\tau n\big)\mathcal A(\z)\right\} \frac{\theta\left(\mathcal A(\z) - \tilde{\mathcal A}(\z_{n,k}) - \frac{\mathsf B+1}2\right)}{\theta\left(\mathcal A(\z) - \tilde{\mathcal A}\big(p^{(k)}\big) - \frac{\mathsf B+1}2\right)}.
\end{equation}
The functions $\Theta_{n,k}(\z)$ are meromorphic on \( \RS_{\boldsymbol\alpha,\boldsymbol\beta} \) with exactly one pole, which is simple and located at $p^{(k)}$, and exactly one zero, which is also simple and located at \( \z_{n,k} \) (observe that the functions $\Theta_{n,k}(\z)$ can be analytically continued as multiplicatively multivalued functions on the whole surface \( \RS \); thus, we can talk about simplicity of a pole or zero regardless whether it belongs to the cycles of a homology basis or not). Moreover, according to \eqref{abel-jumps}, \eqref{jip}, and periodicity properties of $\theta$, they possess continuous traces on $\boldsymbol\alpha,\boldsymbol\beta$ away from \( \pi^{-1}(-1) \) that satisfy
\begin{equation}
\label{ab6}
\Theta_{n,k+}(\s) = \Theta_{n,k-}(\s)\left\{
\begin{array}{rl}
\exp\big\{-\pi\ic(n + 2c_h)\big\}, & \boldsymbol s\in\boldsymbol\alpha\setminus\{\pi^{-1}(-1)\}, \medskip \\
\exp\big\{-2\pi\ic\tau n\big\}, & \boldsymbol s\in\boldsymbol\beta\setminus\{\pi^{-1}(-1)\}.
\end{array}
\right.
\end{equation}

\subsubsection{Subsequences \texorpdfstring{\( \N(\lambda,\varepsilon) \)}{ }}
It will be important for our analysis (see section \ref{global}) that \( \Theta_{n, 1}(\z;\lambda) \), defined in \eqref{ab5}, does not vanish near \( \infty^{(0)} \). Hence, we will consider subsequences \( \N(\varepsilon)=\N(\lambda,\varepsilon) \) are defined as 
\[
\N(\varepsilon) := \left\{ n\in\N:~~\z_{n,1} \not\in \RS^{(0)} \cap \pi^{-1}\big(\big\{|z|\geq 1/\varepsilon\big\}\big) \right\}.
\]
Then there exists a constant \( c(\lambda,\varepsilon)>0 \) such that $|\Theta^{(1)}_{n, 1}(\infty;t)| \geq c(\lambda,\varepsilon)$ for $n \in \N(\lambda,\varepsilon).$ Note that $\N(\lambda, \epsilon)$ contains $n$ or $n - 1$ for all $n \geq 1$. To prove this, suppose to the contrary that for any $\epsilon>0$, there exists $n_{\epsilon}$ such that $n_{\epsilon}, \ n_{\epsilon} - 1 \not\in \N(\lambda, \epsilon)$. By the very definition of \( \N(\lambda, \epsilon) \), it then holds that $\z_{n_{\varepsilon}-1 ,1}, \ \z_{n_{\varepsilon}, 1} \to \infty^{(0)}$ as \( \varepsilon\to 0 \). This implies $1/2 + \mathsf B \tau = m + n \mathsf{B}$ for some $m, n \in \Z$, which is false. We are ready to state the asymptotic formula for $p_n^{\omega}(z)$.

\begin{theorem}[{\bf Supercritical Case $(\lambda > \lambda_{cr})$}]
\label{2-cut-thm}
Let $\lambda > \lambda_{cr}$,  $V(z) = -\ic \lambda z$, $h(z)$ as in Proposition \ref{prop:szego-kiss}, and $\phi_1(z) = \int_1^z Q^{1/2}_{\lambda}(s) \dd s$. Then, there exists a constant $\ell^*$ (defined in \eqref{phi-kiss-2}) so that 
\be
\label{2-cut-thm-formula}
p_n^{\omega}(z) = e^{n(V(z) - \ell^* + \phi_1(z))} \left( \left(A \Theta_{n, 1}^{(0)}  \tilde{S}_h^{(0)}\right)(z) + \mathcal{O} \left( n^{-1}\right) \right)	\qforq n\to \infty, \ n \in \N(\lambda, \epsilon)
\ee
locally uniformly for $z \in \C \setminus \gamma_{\lambda}$.
\end{theorem}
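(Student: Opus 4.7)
My approach is the Deift-Zhou nonlinear steepest descent method applied to the Fokas-Its-Kitaev RHP, following the standard sequence $\boldsymbol Y \to \boldsymbol T \to \boldsymbol S \to \boldsymbol R$. The only substantial departure from \cite{celsus19} is that the global parametrix is assembled from the theta-function objects of Sections \ref{riemann-surface}--\ref{theta-kiss} rather than from meromorphic differentials, so much of the scaffolding of \cite{deano14, celsus19, ApY15} carries over directly.

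I would begin with the $2\times 2$ RHP for $\boldsymbol Y$ with jump $\left(\begin{smallmatrix} 1 & h(x) e^{\ic\omega x} \\ 0 & 1 \end{smallmatrix}\right)$ on $[-1,1]$ and the standard polynomial growth at infinity, whose $(1,1)$-entry is $p_n^\omega$. Holomorphy of $h^*$ on the compact region bounded by $[-1,1]\cup \gamma_1 \cup \gamma_2 \cup \hat\gamma$ lets me deform the contour to $\gamma_1 \cup \gamma_2 \cup \hat\gamma$. Setting $g(z) = \int \log(z-s)\,\dd\mu_\lambda(s)$ and $\boldsymbol T(z) = e^{-n\ell^*\sigma_3/2}\boldsymbol Y(z) e^{-n(g(z)-\ell^*/2)\sigma_3}$, the Euler-Lagrange conditions for $\mu_\lambda$ together with the S-property of Theorem \ref{2-cut-geo} turn the jump on $\gamma_1 \cup \gamma_2$ into one with purely oscillatory diagonal and off-diagonal $h$, while the defining property of $x_*$ makes the jump on $\hat\gamma$ exponentially close to the identity. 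A standard upper/lower factorization and lens opening then produce $\boldsymbol S$ whose lens jumps are exponentially small by the S-property.

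The core step is the construction of the global parametrix $\boldsymbol N$ solving $\boldsymbol N_+ = \boldsymbol N_- \left(\begin{smallmatrix} 0 & h \\ -h^{-1} & 0 \end{smallmatrix}\right)$ on $\gamma_1 \cup \gamma_2$ with $\boldsymbol N \to I$ at infinity. I would take the columns to be built, schematically, from the pull-backs of $A\Theta_{n,k}^{(i)}\tilde S_h^{(i)}$ and $-\ic B\Theta_{n,k}^{(i)}\tilde S_h^{(i)}$ over the two sheets, suitably normalized at infinity. The shifts $c_h + n(\tfrac12 + \mathsf B \tau)$ in \eqref{jip} are engineered precisely so that the $\boldsymbol\alpha$- and $\boldsymbol\beta$-jumps of $\Theta_{n,k}$ in \eqref{ab6} absorb the Szeg\H o monodromy $e^{2\pi\ic c_h}$ from \eqref{szego-kiss-jump} and the $g$-function phase, leaving only the target jump on $\gamma_1 \cup \gamma_2$; the identity \eqref{nt3} for $A, B$ then supplies the off-diagonal structure. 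Restricting to $n \in \N(\lambda,\varepsilon)$ keeps $\Theta_{n,1}^{(0)}(\infty)$ bounded below and $\boldsymbol N$ uniformly invertible. Local parametrices near $\pm 1$ are built from Bessel functions of orders $\alpha, \beta$ (hard edges from $(1-z)^\alpha(1+z)^\beta$) and near $z_*, -\overline{z_*}$ from Airy functions (soft edges of $Q_\lambda$), matching $\boldsymbol N$ with error $O(n^{-1})$ on the disk boundaries. The small-norm theorem applied to $\boldsymbol R = \boldsymbol S \boldsymbol P^{-1}$ then gives $\boldsymbol R = I + O(n^{-1})$ uniformly along $\N(\lambda,\varepsilon)$; unravelling the transformations and reading the $(1,1)$-entry produces \eqref{2-cut-thm-formula}.

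The delicate step is the parametrix construction itself: one must verify cycle by cycle and cut by cut that the monodromies of $\gamma^{\pm 1}$, of $\tilde S_h^{(i)}$, and of $\Theta_{n,k}^{(i)}$ line up to produce exactly the prescribed jump of $\boldsymbol N$, with no residual $\Z + \mathsf B \Z$ mismatch that would spoil single-valuedness on $\C$. A secondary subtlety is the degeneration of $\boldsymbol N$ when $\z_{n,1}$ drifts to $\infty^{(0)}$; this is neutralized by restricting to $\N(\lambda,\varepsilon)$ and invoking the observation, made just before Theorem \ref{2-cut-thm} (whose proof uses Proposition \ref{prop:Nt}), that $\N(\lambda,\varepsilon)$ contains $n$ or $n-1$ for every $n$, which is enough to realize the stated asymptotic along the full sequence once both parity subsequences are treated.
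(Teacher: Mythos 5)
Your plan follows the paper's proof essentially step for step: the same $\boldsymbol Y \to \boldsymbol T \to$ (lens opening) $\to \boldsymbol R$ chain, the same theta-function/Szeg\H{o}-function global parametrix in the style of \cite{ApY15}, Bessel parametrices at $\pm 1$ and Airy parametrices at $z_*, -\overline{z_*}$, and the restriction to $\N(\lambda,\varepsilon)$ to keep $\Theta_{n,1}^{(0)}(\infty)$ bounded away from zero. The only cosmetic discrepancy is that the residual diagonal jump on $\hat\gamma$ is $e^{n\pi\ic\sigma_3}=(-1)^n\boldsymbol I$ rather than exactly the identity, which is precisely why the paper retains it in \rhn\ and absorbs it via the $n/2$ shift in \eqref{jip}; your construction already accounts for this, so the proposal is correct and matches the paper's approach.
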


As was discussed in the introduction, both one- and two-cut cases require the same analysis in spirit. Hence, we will start with the proof of Theorem \ref{2-cut-thm} in Section \ref{two-cut-pf}, and sketch the proofs Theorems \ref{one-cut-thm}, \ref{crit-thm} in Sections \ref{one-cut-pf}, \ref{crit-pf}, respectively.

\section{Proof of Theorem \ref{2-cut-thm}}
\label{two-cut-pf}

\subsection{$g$-function}
\label{sub-g}
Before we begin our analysis of polynomials $p_{n}^{\omega}$, we will require a collection of functions and their properties, which we list here for convenience. Let
\be
g(z) := \int \log(z - s) \dd \mu_{\lambda}(s), \ z \in \C \setminus (-\infty, -1) \cup \gamma_{\lambda}
\ee
where $\log(\cdot - s)$ is holomorphic outside of $(-\infty, -1] \cup \gamma_{\lambda}[-1, s)$, where $\gamma_{\lambda}(z_1, z_2), \ z_1, z_2 \in \gamma_{\lambda}$ is the segment of $\gamma_{\lambda}$ that proceeds from $z_1$ to $z_2$. Then it follows from \eqref{q-kissing} that there is $\ell^* \in \C$ so that
\be
\label{phi-kiss-2}
g(z) = \dfrac{V(z) - \ell^*}{2} + \phi_1(z) \qandq \phi_{e}(z) := 2 \int_{e}^{z} Q_{\lambda}^{1/2}(s) \dd s, \quad e \in \{ \pm 1, z_*, -\overline{z_*} \} ,
\ee
where the domain of holomorphy for $\phi_e$ is $\C \setminus ((-\infty, -1) \cup \gamma_{\lambda})$ for $e = 1$, $\C \setminus (\gamma_{\lambda} \cup [1, \infty))$ for $e = -1$, and $\C \setminus (-\infty, -1) \cup \gamma_{\lambda}(-1, -\overline{z_*}) \cup \gamma_{\lambda}(z_*, 1) \cup [1, \infty) ) $ for $e \in \{z_*, -\overline{z_*} \}$. From Figure \ref{fig:2-cut-kiss-graph}, we immediately deduce that $ \tau \in \R$ (see \eqref{kissing-periods}) and
\be
\label{kissing-phi-1-jumps}
\phi_{1, \pm} (s) = \left\{ \begin{array}{ll}
	\pm 2 \pi \ic \mu_{\lambda}(\gamma_{\lambda}[s, 1]), & s \in \gamma_{2}, \\
	\pm 2\pi \ic \mu_{\lambda}(\gamma_{\lambda}[s, 1]) + 2\pi \ic \tau, & s \in \gamma_{1}
\end{array}\right..
\ee
Furthermore, using the fact that $\mu_{\lambda}$ is a probability measure and definition \eqref{kissing-periods} yields
\be
\label{phi-kiss-2-relations}
\phi_{1}(z) = \left\{  
\begin{array}{l}
	\phi_{z_*}(z) \pm \pi \ic   \\
	\phi_{-\overline{z_*}}(z) \pm \pi \ic  + 2\pi \ic \tau\\
	\phi_{-1}(z) \pm 2\pi \ic  + 2\pi \ic \tau
\end{array}
\right., \quad z \in \C \setminus (-\infty, -1) \cup \gamma_{\lambda} \cup (1, \infty)
\ee
and $+$ (resp. $-$) is chosen when $z$ belongs to the left (resp. right) of $(-\infty, -1) \cup \gamma_{\lambda} \cup (1, \infty)$, oriented from $-\infty$ to $\infty$, and we use the fact that 
\be
\label{omega}
\dfrac{1}{2} = -\dfrac{1}{\pi \ic} \int_{\gamma_1} Q^{1/2}_{\lambda, +}(s) \dd s
\ee
which follows from a residue calculation and the reflection symmetry of $\gamma_{1}, \gamma_2$, see \cite[Proposition 3.5]{celsus19}. With this, \eqref{kissing-phi-1-jumps}, and \eqref{phi-kiss-2} in mind, we can write 
\be
\label{g-2-cut-jump}
(g_+ - g_-)(s) = \left\{  
\begin{array}{ll}
	0, &  s \in (1, \infty),\\
	\pm \phi_{1, \pm}(s), & s \in \gamma_{2},\\
	\pi \ic , & s \in \hat{\gamma},\\
	\pm (\phi_{1, \pm}(s) - 2\pi \ic \tau), & s \in \gamma_{1},\\
	2\pi \ic, & s \in (-\infty, -1).
\end{array}
\right..
\ee
Furthermore, 
\be
\label{g-2-cut-sum}
(g_+ + g_- - V + \ell^*)(s) = \left\{  
\begin{array}{ll}
	\phi_1(s), & s \in (1, \infty),\\
	0, & s \in \gamma_2,\\
	\phi_{z_*}(s), & s \in \hat{\gamma},\\
	2 \pi \ic \tau, & s \in \gamma_1, \\
	\phi_{-1}(s) + 2\pi \ic \tau, & s \in (-\infty, -1).
\end{array}
\right..
\ee 
For $e \in \{ \pm 1 \}$, $\phi_e(z) \sim |z - e|^{1/2}$ as $z \to e$. Hence, it follows from \eqref{kissing-phi-1-jumps}, \eqref{phi-kiss-2-relations} that $(\phi_e(z))^2$ is well-defined and conformal in a small enough neighborhood of $e$, which we will denote $U_{e}$. Furthermore, it follows from \eqref{kissing-phi-1-jumps} that $(\phi_{1}(z))^2$ maps $\gamma_1\cap U_1$ into $(-\infty, 0)$ and $(\phi_{-1}(z))^2$ does the same to $\gamma_2 \cap U_{-1}$. In a similar vein, for $e \in \{ z_*, -\overline{z_*} \}, \ \phi_{e}(z) \sim |z - e|^{3/2}$ as $z \to e$. In a small neighborhood of $z = e$, \eqref{phi-kiss-2-relations} allows us to write 
\begin{equation}
\phi_{z_*, \pm}(s) = \mp 2 \pi \ic \mu_{\lambda} (\gamma_{\lambda}[z_*, s]), \quad \phi_{-\overline{z_*}, \pm}(s) = \pm 2 \pi \ic \mu_{\lambda}(\gamma_{\lambda}[s, -\overline{z_*}]).
\end{equation}
Hence, an analytic branch of $(-\phi_e)^{2/3}$ can be chosen and  $(-\phi_e)^{2/3}$ is conformal in a neighborhood of $z = e$. By the choice of $\gamma_{\lambda}$ (see Figure \ref{fig:2-cut-kiss-graph} and the second paragraph of Section \ref{riemann-surface}), both $(\phi_e(z))^2, \ e \in \{ \pm 1 \}$ and $(-\phi_e)^{2/3}, \ e \in \{z_*, -\overline{z_*} \}$ map the segments of $\gamma_1, \gamma_2$ within $U_e$ into $(-\infty, 0)$.
\subsection{Initial Riemann-Hilbert Problem}
We first deform $[-1, 1]$ to a curve $\gamma_{\lambda}$ that goes along $\gamma_{1}$, starting at $-1$ smoothly proceeds from $-\overline{z_*}$ to $z_*$ along $\hat{\gamma}$, and goes along $\gamma_2$ to $1$. To arrive at asymptotics of $p_n^{\omega}(z)$, we will use the Riemann-Hilbert approach along with Deift-Zhou nonlinear steepest descent method. The connection between the RHP below and orthogonal polynomials was first observed in the work of Fokas, Its, and Kitaev \cite{FIK91, FIK92}, while the nonlinear steepest descent method was developed by Deift and Zhou in \cite{DZ93}. More precisely, we seek a matrix $\boldsymbol Y$ that solves the following RHP (denoted \rhy)
\begin{enumerate}[(a)]
	\label{rhy}
	\item $\boldsymbol Y$ is analytic in $\C \setminus \gamma_{\lambda}$, and $\lim_{z \to \infty} \boldsymbol{Y}(z) z^{-n \sigma_3} = \boldsymbol{I}$ \footnote{Here, $\boldsymbol {I}$ is the identity and $\sigma_3 = \left(\begin{matrix}1 & 0\\ 0& -1 \end{matrix} \right).$}
	\item $\boldsymbol Y$ has continuous traces as $z \to \gamma_{\lambda}\setminus \{\pm 1 \}$ and 
	\[
	\boldsymbol{Y}_+(s) = \boldsymbol{Y}_-(s) \left(\begin{matrix} 
	1 & w_n(s) \medskip \\
	0 & 1
	\end{matrix}\right) \ \text{ for } s \in \gamma_{\lambda} \setminus \{\pm 1 \},
	\]
	where $\gamma_{\lambda}$ is oriented from $-1$ to $1$ and $w_z(z) = h(z)e^{ \ic \lambda n z}$. 
	\item As $z \to 1$, the first column of $\boldsymbol Y$ is bounded while the second behaves like $\mathcal{O}(|z - 1|^{\alpha}), \mathcal{O}(\log|z - 1|), \mathcal{O}(1)$, for $\alpha \in (-1, 0), \ \alpha = 0, \ \alpha>0$, respectively.
	Similar behavior holds as $z \to -1$ (replace $\alpha \to \beta$ and $1 \to -1$). 
\end{enumerate}
it was observed in \cite{FIK91, FIK92} that under the assumption that
\begin{equation}
	\label{assumption}
	\deg p_n^{\omega} = n \qandq \mathcal{C}(p_n^{\omega}w_n )(z) \sim z^{-(n+1)} \qasq z \to \infty, 
\end{equation}
where \( (\mathcal{C}f)(z) = \frac{1}{2\pi \ic} \int_{\gamma_{\lambda}} [f(s)/(s - z)] \dd s, \) this problem is solved by the matrix 
\begin{equation}
\label{y}
{\boldsymbol Y}(z) := \left(
\begin{matrix}
p_n^{\omega}(z) & \mathcal{C}(p_n^{\omega} w_n)(z) \medskip \\
-2\pi \ic \kappa^2_{n - 1} p_{n-1}^{\omega}(z) & -2\pi \ic \kappa^2_{n - 1}\mathcal{C}(p_{n - 1}^{\omega} w_n)(z)
\end{matrix}\right),
\end{equation}
where $\kappa_{n}$ is the leading of the orthonormal polynomials associated with $w_n(z)$, so that $\kappa_{n-1}\mathcal{C}(p_{n-1}^{\omega} w_n)(z) = z^{-n}[1 + o(1)]$ as $ \to \infty.$ Moreover, any solution of \hyperref[rhy]{\rhy} must take the form in \eqref{y} (see, for example, \cite{ApY15}).
\subsection{First Transformation}
Let $\boldsymbol T(z) := e^{n \ell^* \sigma_3} \boldsymbol Y(z) e^{-n(g(z) + \ell^*/2) \sigma_3}$. Then, $\boldsymbol T$ satisfies the following RHP, denoted \rht
\begin{enumerate}[(a)]
	\label{rht}
	\item $\boldsymbol T(z)$ is holomorphic in $\C \setminus ((-\infty, -1) \cup \gamma_{\lambda} \cup (1, \infty))$ and $\lim_{z \to \infty} \boldsymbol T = \boldsymbol I$, 
	\item $\boldsymbol T(z)$ has continuous traces on $((-\infty, -1) \cup \gamma_{\lambda} \cup (1, \infty)) \setminus \{ \pm 1, z_*, -\overline{z_*} \}$ that satisfy
	\[
	\boldsymbol T_+(s) = \boldsymbol T_-(s) \left\{
	\begin{array}{ll}
	\left( \begin{matrix}
	e^{-n (\phi_{1, +}(s) - 2\pi \ic \tau)} & h(s)e^{2n\pi \ic \tau }\\
	0 & e^{-n (\phi_{1, -} - 2\pi \ic \tau)}
	\end{matrix}\right),& s \in \gamma_1,\\
	\left( \begin{matrix}
	e^{n\pi \ic } & h(s)e^{n \phi_{z_*}(s) }\\
	0 & e^{-n\pi \ic }
	\end{matrix}\right),& s \in \hat{\gamma}\\
	\left( \begin{matrix}
	e^{-n \phi_{1, +}(s) } & h(s)\\
	0 & e^{-n \phi_{1, -}(s) }
	\end{matrix}\right),& s \in \gamma_2,
	\end{array} \right.
	\]
	\item $\boldsymbol T$ behaves the same as $\boldsymbol Y$ as $z \to \pm 1$.
\end{enumerate}

\subsection{Opening the Lenses}

\begin{figure}[ht!]
	\centering
	\includegraphics[width = 0.5 \textwidth]{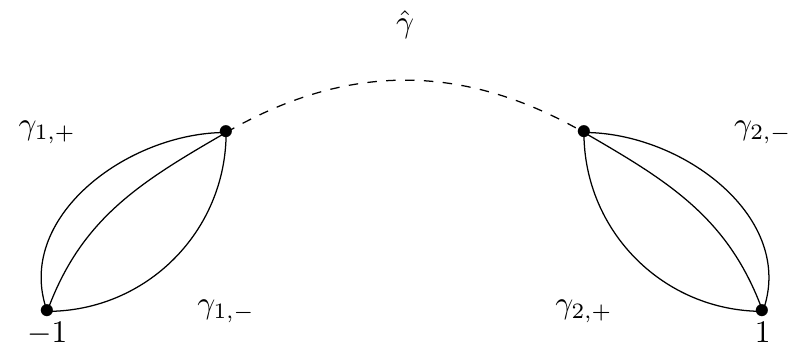}
	\caption{Opening the lenses in the supercritical regime for kissing polynomials}
	\label{fig:kiss-2-cut-lenses}
\end{figure}

Denote by $\Gamma_{i, \pm}$ the open sets delimited by $\gamma_{i, \pm}$ and $\gamma_i$. Set
\be
\label{kiss-x}
\boldsymbol X(z) := \boldsymbol T(z) \left\{
\begin{array}{ll}
	\left(\begin{matrix} 1 & 0 \\ \mp e^{-n\phi_{1}(z)}/h(z) & 1 \end{matrix}\right), & z\in \Gamma_{i_\pm}, \medskip \\
	\boldsymbol I, & \text{otherwise}.
\end{array}
\right.
\ee
Then $\boldsymbol X$ solves the following RHP (\rhx)
\begin{enumerate}[(a)]
\label{rhx}
\item $\boldsymbol X$ is analytic in $\C\setminus(\gamma_{\lambda} \cup \gamma_{i, \pm})$, $\lim_{z \to \infty} \boldsymbol X = \boldsymbol I$, 
\item $\boldsymbol X$ has continuous traces on $\gamma_{\lambda}\setminus\{\pm 1, -\overline{z_*}, z_*\}$ that satisfy \hyperref[rht]{\rht}(b) on $\hat{\gamma}$, as well as
\[
\boldsymbol X_+(s) = \boldsymbol X_-(s) \left\{
\begin{array}{rl}
e^{(2 - j)2n \pi \ic \tau \sigma_3} \left(\begin{matrix} 0 & h(s) \\ -1/h(s) & 0 \end{matrix}\right), & s\in \gamma_j, \ j = 1, 2, \medskip \\
\left(\begin{matrix}1& 0 \\ e^{-n\phi_{1}(s)}/h(s) &1\end{matrix}\right), & s\in \gamma_{i, \pm}, \ i = 1, 2,
\end{array}
\right.
\]
\item  as $z \to 1$ from outside [inside],
\[
\boldsymbol{X}(z) = \left\{  
\begin{array}{ll}
\mathcal{O} \left( \begin{matrix}
1 & |z - 1|^{\alpha}\\
1 & |z - 1|^{\alpha} 
\end{matrix} \right)  & \text{ for } -1 < \alpha < 0 \medskip \\
\mathcal{O} \left( \begin{matrix}
1 & \log|z - 1|  \\
1 & \log |z - 1|
\end{matrix}\right) & \text{ for }  \alpha = 0 \medskip \\
\mathcal{O} \left( \begin{matrix}
1 & 1  \\
1 & 1
\end{matrix}\right) & \text{ for } \alpha > 0
\end{array}
\right.
\left[ \left\{  
\begin{array}{l}
\mathcal{O} \left( \begin{matrix}
1 & |z - 1|^{\alpha}\\
1 & |z - 1|^{\alpha} 
\end{matrix} \right) \medskip \\
\mathcal{O} \left( \begin{matrix}
\log |z - 1| & \log|z - 1|  \\
\log|z - 1| & \log |z - 1|
\end{matrix}\right)  \medskip \\
\mathcal{O} \left( \begin{matrix}
|z - 1|^{\alpha} & 1  \\
|z - 1|^{\alpha} & 1
\end{matrix}\right)  
\end{array}
\right] \right.
\]
with similar behavior for $z \to -1$ where $\beta$ replaces $\alpha$.
\end{enumerate}
\subsection{Global Parametrix}
Using \eqref{kissing-phi-1-jumps}, \eqref{kissing-periods}, we see that the jumps on $\gamma_{i, \pm}$ and the off diagonal entry in the jump on $\hat{\gamma}$ are exponentially small. Hence, the RHP for the global parametrix is obtained from \hyperref[rhx]{\rhx} by removing those quantities. Thus, we are seeking a matrix $\boldsymbol N$ satisfying the following RHP (\rhn)
\begin{enumerate}[(a)]
\label{rhn}
\item $\boldsymbol N$ is analytic off of $\gamma_{\lambda}$, satisfying $\lim_{z \to \infty} \boldsymbol N = \boldsymbol I$ 
\item $\boldsymbol N$ possesses continuous traces on $\gamma_{\lambda}\setminus\{\pm 1, -\overline{z_*}, z_*\}$ that satisfy
\[
\boldsymbol N_+(s) = \boldsymbol N_-(s) \left\{
\begin{array}{ll}
	e^{(2-j)2n \pi \ic \tau \sigma_3} \left(\begin{matrix} 0 & h(s) \\ -1/h(s) & 0 \end{matrix}\right), & s\in \gamma_j, \ j = 1, 2, \medskip \\
	e^{n\pi \ic \sigma_3}, & s\in \hat{\gamma}.
\end{array}
\right.
\]
\end{enumerate}
We shall solve this problem only for \(n\in\N(\varepsilon)=\N(\lambda,\varepsilon) \) from Section~\ref{theta-kiss}.
\label{global}
To that end, let 
\begin{equation}
\label{ab7}
M_{n,0}(\z) = \Theta_{n,0}(\z) \left\{ \begin{array}{r} B(z), ~~ \z\in \RS^{(0)}, \medskip \\ A(z), ~~ \z\in \RS^{(1)}, \end{array}\right. \qandq  M_{n,1}(\z) = \Theta_{n,1}(\z) \left\{ \begin{array}{r} A(z), ~~ \z\in \RS^{(0)}, \medskip \\ -B(z), ~~ \z\in \RS^{(1)},\end{array}\right.
\end{equation}
where functions $A(z), B(z)$ are defined in \eqref{ab-kissing}. These functions are holomorphic on \( \RS\setminus\{\boldsymbol\alpha \cup \boldsymbol\beta \cup \pi^{-1} (\gamma_\lambda) \} \) since the pole of \( \Theta_{n,k}(\z) \) is canceled by the zero of \( B(z) \). Each function \( M_{n,k}(\z) \) has exactly two zeros, namely, \( \z_{n,k} \) and \( \infty^{(k)} \). It follows from \eqref{nt3} and \eqref{ab6} that
\begin{equation}
\label{Mn-jumps}
\left\{
\begin{array}{rl}
M_{n,k\pm}^{(0)}(s)=\mp M_{n,k\mp}^{(1)}(s), & s \in \gamma_{2}, \medskip \\
M_{n,k\pm}^{(0)}(s)=\mp e^{-2\pi\ic\tau n}M_{n,k\mp}^{(1)}(s), & s\in \gamma_1, \medskip \\
M_{n,k\pm}^{(i)}(s) = e^{(-1)^i \pi\ic(n + 2 c_h)}M_{n,k\mp}^{(i)}(s), & s\in\hat{\gamma}.
\end{array}
\right.\end{equation}
Then, with \( \tilde{S}_h \) as defined by \eqref{szego-kiss}, a solution of \hyperref[rhn]{\rhn} is given by
\begin{equation}
\label{rh5}
\boldsymbol N(z) = \boldsymbol M^{-1}(\infty)\boldsymbol M(z), \quad \boldsymbol M(z) := \left(\begin{matrix} M_{n,1}^{(0)}(z) & M_{n,1}^{(1)}(z) \medskip \\ M_{n,0}^{(0)}(z) & M_{n,0}^{(1)}(z) \end{matrix}\right) \tilde{S}_h^{\sigma_3}(z^{(0)}) .
\end{equation}
Indeed,  \hyperref[rhn]{\rhn}(a) follows from holomorphy of \( \tilde{S}_h(z) \) and \( M_{n,k}(\z) \) discussed in Proposition~\ref{prop:szego-kiss} and right after \eqref{ab7}. \hyperref[rhn]{\rhn}(b) can be checked by using \eqref{szego-kiss-jump} and \eqref{Mn-jumps}. It will be important for our analysis that $\boldsymbol N$ be invertible, which it is. Indeed, since the jump matrices for $\boldsymbol N$ all have determinant 1 and $\lim_{z \to \infty} \boldsymbol N(z) = \boldsymbol I$, the function $\det(\boldsymbol N(z))$ is holomorphic in $\overline{\C} \setminus \{\pm 1, -\overline{z_*}, z_*\}$, with at most square root singularities there, and hence is a constant. The normalization at infinity yields $\det(\boldsymbol N(z)) = 1$ identically.

\subsection{Local Parametrices}
\label{2-cut-local}
Let $U_{e}, \ e \in \{\pm 1 \}$ be an open disk centered at $e$ with fixed radius $\delta$ small enough so that it is in the domain of holomorphy of $h^*(z)$. We seek a matrix $\boldsymbol P_{e}, $ that solves the following \textnormal{RHP}-${\boldsymbol P_e}$:
\begin{enumerate}[(a)]
	\label{rhpe-kiss-2}
	\item[(a, b)] $\boldsymbol P_e$ satisfies \hyperref[rhx]{\rhx}(a, b, c) within $U_e$, 
	\item[(c)] $\boldsymbol P_e(z) = \boldsymbol N(z) \left( \boldsymbol I + \mathcal{O}\left( n^{-1} \right) \right)$ uniformly on $\partial U_e$ as $n \to \infty$. 
\end{enumerate}
Denote $\boldsymbol \Psi_{-1}(\zeta) := \sigma_3 \boldsymbol \Psi_{\alpha}(\zeta) \sigma_3$, $\boldsymbol \Psi_{1}(\zeta) := \boldsymbol \Psi_{\beta}(\zeta)$, where $\boldsymbol \Psi_{\alpha}$ is as in \cite[Equations (6.23) - (6.25)]{KMVaV01}. Furthermore,  $\boldsymbol \Psi_e := \sigma_3 \boldsymbol A \sigma_3$ for $e = z_*$, $\boldsymbol \Psi_e = \boldsymbol A$ for $e = -\overline{z_*}$ and $\boldsymbol A$ is the Airy matrix that appears in \cite[Section 7.6]{Deift99}. Define
\be
\label{J-kiss-2}
\boldsymbol J_e = \left\{ \begin{array}{lr}
	\boldsymbol I, & e = 1,\\
	e^{-n\pi \ic \tau},& e = -1, \\
	e^{\pm \pi\ic n\sigma_3/2}, & e=z_*, \\
	e^{\pi\ic(- \tau \pm 1/2)n\sigma_3}, & e=-\overline{z_*}.
\end{array}\right.
\ee
where the ``+" is used for $z$ to the left of $(-\infty, -1) \cup \gamma_{\lambda} \cup (1, \infty)$ and the ``-" sign is used otherwise Next, let $r_{1}(z) = \sqrt{h^{*}(z) (z + 1)^{\beta}}(z - 1)^{\alpha/2}, \ z \in U_{1}\setminus \gamma_{\lambda}$ and $(z - 1)^{\alpha/2}$ is principal, with $r_{-1}$ is defined similarly, and $r_e = \sqrt{h(z)}$ be a holomorphic branch in $U_{e}$ for $e \in \{ z_*, -\overline{z_*}\}$. Finally, let
\begin{equation}
\zeta_e(z) := \left( \dfrac{1}{4} \phi_e(z) \right)^2 , \quad e \in \{ \pm 1 \}, \quad \zeta_e(z) := \left( -\dfrac{3}{4} \phi_e(z) \right)^{2/3} , \quad e \in \{ z_{*}, -\overline{z_*} \},
\end{equation}
where $\phi_e$ is defined in \eqref{phi-kiss-2} and the branches are chosen as in Subsection \ref{sub-g}.  We now require that $\gamma_{i, \pm}$ be preimages of $I_{\pm} := \{ z \ : \ \arg(\zeta) = \pm 2\pi/3\}$. \\

It now follows by the definition of $\boldsymbol J, \boldsymbol \Psi_e, r_e$ and \eqref{phi-kiss-2}, \eqref{phi-kiss-2},\eqref{J-kiss-2}, and \eqref{phi-kiss-2-relations} that 
\be
\boldsymbol P_e(z) = \boldsymbol E_e(z) \boldsymbol \Psi_e(n^2 \zeta_e(z)) r_{e}^{-\sigma_3} e^{-n\phi_e(z) \sigma_3/2} \boldsymbol J_e
\ee
satisfies \hyperref[rhpe-kiss-2]{RHP-$\boldsymbol P_e$}(a, b). The choice of $\boldsymbol E_e$ to ensure \hyperref[rhpe-kiss-2]{RHP-$\boldsymbol P_e$}(c) holds is made below. To satisfy the matching condition \hyperref[rhpe-kiss-2]{\textnormal{RHP}-${\boldsymbol P_e}$}(c), we simply need to choose 
\be
\boldsymbol E_{e}(z) := \boldsymbol N(z) \boldsymbol J^{-1}_e r_e^{\sigma_3}(z) \boldsymbol S_e^{-1}(n^2 \zeta_e(z)),
\ee 
where $\boldsymbol S_e = \sigma_3\boldsymbol S \sigma_3$ for $e = -1$ and $\boldsymbol S_e = \boldsymbol S$ for $e = 1$, and \( \displaystyle \boldsymbol S(\zeta) := \frac{\zeta^{-\sigma_3/4}}{\sqrt2}\left(\begin{matrix} 1 & \ic \\ \ic & 1 \end{matrix}\right) \) and we take the principal branch of \( \zeta^{1/4} \). Holomorphy in $U_e \setminus \{e\}$ follows from \hyperref[kiss-2-rhn]{\rhn}(b), definition of $\boldsymbol S$, while the behavior of $\boldsymbol N$ near $e \in \{ \pm 1 \}$, the behavior of $r_e$ near $e$, and the fact that $\zeta_e(z)$ possesses a simple zero at $e$ yield holomorphy in $U_e$.
\subsection{Final Riemann-Hilbert Problem}
\label{final}
We now define 
\begin{equation}
\label{r-kiss}
\boldsymbol{R}(z) := \boldsymbol{X}(z) \left\{  
\begin{array}{ll}
\boldsymbol{N}^{-1}(z), &  z\in \C \setminus \left(\cup_{e}U_{e} \cup \gamma_{\lambda} \cup \gamma_{i, \pm}\right),\\
\boldsymbol{P}_{e}^{-1}(z), &  z \in U_e \setminus (\gamma_{\lambda} \cup \gamma_{i, \pm}),\\
\end{array}
\right.
\end{equation}
where $\partial U_{e}$ are oriented clockwise. Then, $\boldsymbol{R}(z)$ is analytic in $\C \setminus (\gamma_{i, \pm} \cup \left(\cup_e\partial U_e \right))$ and 
\begin{equation}
\label{r-jump}
\boldsymbol{R}_+(s) = \boldsymbol{R}_{-}(s) \left\{ 
\begin{array}{ll}
\boldsymbol{I} + \mathcal{O}(e^{-cn}) & \text{ for } s \in (\gamma_{\lambda} \cup \gamma_{i, \pm})\setminus U_e ,\medskip \\
\boldsymbol{I} + \mathcal{O}\left(n^{-1}\right) & \text{ for } s \in \cup_{e}\partial U_{e} .
\end{array}
\right.
\end{equation}
The first equality follows from the fact that $\re(\phi_1) >0$ on $\Gamma_{\pm}$, which follows from noting that the formula $\re(2\phi_1(z)) = \re(V(z)) - \ell - U^\mu(z)$ implies $\re(\phi_1)$ is subharmonic in a neighborhood of $z \in \gamma_{\lambda}$ and applying the maximum principle, while the second equality holds by boundedness of $\boldsymbol N$ with $n$ and construction of $\boldsymbol P_{e}$ , see \hyperref[rhpe-kiss-2]{\textnormal{RHP}-$\boldsymbol P_e$}(c). It now follows from \cite[Corollary 7.108]{Deift99} that 
\begin{equation}
\label{R-asymp}
\boldsymbol{R}(z) = \boldsymbol I + \mathcal{O} \left(n^{-1}\right) \qasq n \to \infty,
\end{equation}
uniformly for $z \in \C \setminus (\gamma_{i, \pm} \cup \left( \cup_{e} \partial U_{e} \right))$. The asymptotic formula of $p_n^{\omega}(z)$ outside the lenses and away from endpoints follows by undoing the above transformations as was done in \cite{deano14}.

\section{Sketch of Proof of Theorem \ref{one-cut-thm}}
\label{one-cut-pf}

The starting point for this analysis is the same initial problem \hyperref[rhy]{\rhy}, with $\gamma_{\lambda}$ as in Theorem \ref{kissing-geo-one-cut}. We highlight only the main steps here:
\begin{enumerate}
	\item[(a)] Using the same $g$-function as in \cite{deano14} and $\phi$ as in Theorem \ref{kissing-geo-one-cut}, we make the transformation $\boldsymbol T(z) = 2^{n \sigma_3} \boldsymbol{Y}(z) e^{-n[g(z) + \log 2] \sigma_3}$. The main difference to highlight is that the jump of $\boldsymbol T$ are slightly different:
		\[
	\boldsymbol T_{+}(s) = \boldsymbol T_-(s) \left( \begin{matrix}
	e^{-n \phi_+(s)} & h(s)\\
	0 & e^{n \phi_+(s)}
	\end{matrix} \right) \quad \text{ for } s \in \gamma_{\lambda} \setminus \{ \pm 1 \}.
	\]
	\item[(b)] We `open the lenses' in a similar fashion as well
	\begin{equation}
	\boldsymbol X(z) = \left\{ 
	\begin{array}{ll}
	\boldsymbol T(z) & z \text{ outside the lens}, \\
	\boldsymbol{T}(z) \left(\begin{matrix}1 & 0 \\
	-e^{-n\phi(z)}/h(z) & 1 \end{matrix}\right) & z \text{ on the upper lens}, \medskip \\
	\boldsymbol{T}(z) \left(\begin{matrix}1 & 0 \\
	e^{-n\phi(z)}/h(z) & 1 \end{matrix}\right) & z \text{ on the lower lens},
	\end{array}
	\right.
	\end{equation}
	where the `upper' and `lower' lips refer to Figure \ref{fig:gamma}
	\begin{figure}[ht!]
		\centering
		\includegraphics[width = 0.4 \textwidth]{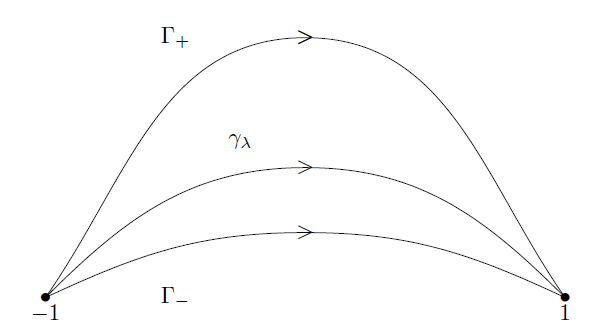}
		\caption{Curves $\Gamma_{\pm}$ and $\gamma_{\lambda}$}
		\label{fig:gamma}
	\end{figure}

	\item[(c)] To account for $h(z)$ in the weight of orthogonality, we define a different Szeg\H{o} function, which is given in \eqref{szego}.  Observe that $S_h$ is analytic and non-vanishing in $\C \setminus \gamma_{\lambda}$ and satisfies
	\begin{equation}
	\label{szego-jump}
	S_{h, +}(s) S_{h, -}(s) = (w_+ h)(s) \quad \text{ for } s \in \gamma_{\lambda} \setminus \{ \pm 1 \}.
	\end{equation} 
	Using this, we construct the global parametrix, $\boldsymbol N$ (here, $w, \varphi$ are as in \eqref{w-1})
	\begin{equation}
	\label{n}
	\boldsymbol{N}(z) := (S_h(\infty))^{\sigma_3} \left(\begin{matrix}
	1 & 1/w(z) \medskip \\
	1/2\varphi(z) & \varphi(z)/2w(z)
	\end{matrix}\right) S_h^{-\sigma_3}(z),
	\end{equation}
	\item[(d)] The local parametrices needed near $z = \pm 1$ are as in \cite{KMVaV01} to allow for a general $\alpha, \beta$ in the weight $h(z)$. Similar local analysis was done in Section \ref{2-cut-local}
	\item[(e)] The final RHP is defined in a similar fashion to what was done in Section \ref{final}
\end{enumerate}

\section{Sketch of Proof of Theorem \ref{crit-thm}}
\label{crit-pf}
In the case $\lambda = \lambda_{cr}$ curve $\gamma_{\lambda}$ seizes to be smooth, and we must modify the lenses as shown in Figure \ref{fig2}. In this setting, we will define matrices $\boldsymbol T, \boldsymbol X$, $\boldsymbol N$, and $\boldsymbol R$ in the same way as was done in the sub-critical case. However, we will need to perform some local analysis at the midpoint of $\gamma_{\lambda}$, which lies at $ 2 \ic/\lambda_{cr}$. 

\begin{figure}[ht!]
	\centering
	\includegraphics[width = 0.4 \textwidth]{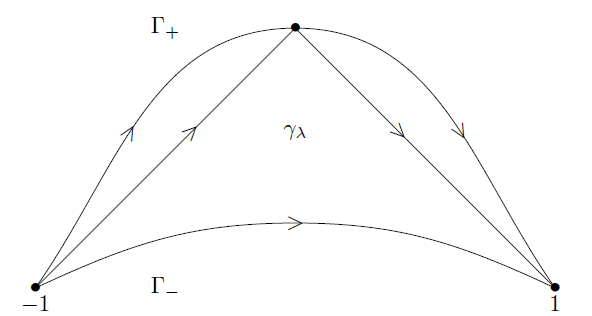}
	\caption{Curves $\Gamma_{\pm}$ and $\gamma_{\lambda}$.}
	\label{fig2}
\end{figure}

\subsection{Local Parametrix around \texorpdfstring{$2 \ic/\lambda_{cr}$}{the midpoint of gamma lambda}}

Let $U_c$ be a disk centered at $z^* = 2\ic/\lambda_{cr}$ small enough so that $h(z)$ (see the second line of Section \ref{two-cut-pf}) is holomorphic in $\overline{U}_c$, and let $\phi$ be defined as in Theorem \ref{kissing-geo-one-cut}. We seek a matrix $\boldsymbol{P}_c(z)$ to solve the following RHP (\rhpc):
\begin{enumerate}[(a)]
	\label{rhp-kissing-crit}
	\item[(a, b)] $\boldsymbol{P}_{c}(z)$ satisfies \hyperref[rhx]{\rhx}(a, b) within $U_c$ 
	\item[(c)] $\boldsymbol P_{c}(z)$ is bounded as $z \to 2\ic/\lambda_{cr}$ and $\boldsymbol{N}^{-1}(z) \boldsymbol{P}_{c}(z)  = \boldsymbol I + \mathcal{O}\left( n^{-1/2} \right)$ uniformly for $z \in \partial U_c$.
\end{enumerate}
We will need a new conformal map near the point $2\ic/\lambda_{cr}$. To this end, let $\phi_c(z) = \pm \phi(z), \ z \in U_{c, \pm}$, where $U_{c, +}$ (resp., $U_{c, -}$) is the component of $U_c$ to the left (resp., right) of $\gamma_{\lambda}$. Then, $\phi_c$ is holomorphic in $U_c$ and since $z^*$ is a simple zero of $Q_{\lambda_{cr}}^{1/2}$, we have that 
\(|\phi_c(z) - \phi_c(z^*)|  \sim |z - z^*|^2 \) as \(z \to z^*\). Furthermore, by Theorem \ref{kissing-geo-one-cut}, we have that $\phi_{\pm}(s) = \pm 2 \pi \ic \mu_{\lambda}([s, 1])$ for $s \in \gamma_{\lambda},$ and we can see that $\phi_c(z)$ is purely imaginary and positive on $\gamma_{\lambda}(-1, z^*)$ and negative purely imaginary on $\gamma_{\lambda}(z^*, 1)$. With this in mind, we can define a branch of $(\phi_c(z) - \phi(z^*))^{1/2}$ that is holomorphic and, WLOG (up to restricting $U_c$ to a smaller neighborhood) conformal in $U_c$ and maps $\gamma_{\lambda}(-1, z^*) \cap U_c$ to $\{ z \ | \ \arg(z) = \pi/4 \}, \ \gamma_{\lambda}(z^*, 1) \cap U_c$ to $\{ z \ | \ \arg(z) = 3\pi/4 \} $. Using this branch, the map 
\(
\label{zeta-crit}
\zeta_c(z) := - (\phi_c(z) - \phi_c(z^*))^{1/2}
\)
is conformal, maps $\gamma_{\lambda}(-1, z^*) \cap U_c$ into $\{ z \ | \ \arg(z) = 5\pi/4 \}$ and $\Gamma_{+}$ into $\R$. \\

Since $h(z)$ is holomorphic and nonvanishing in $U_c$, we can define a holomorphic branch of $r(z):= \sqrt{h(z)}$. Furthermore, let 
\be
\boldsymbol J(z) := \left\{ \begin{array}{ll}
	\left( \begin{matrix} 0 & -1 \\ 1 & 0  \end{matrix} \right), & z \in U_{c, +},\\
	\boldsymbol I, & z \in U_{c, -} .
\end{array}\right.
\ee
Finally, let $\boldsymbol C$ be the matrix given in \cite[Section 7.5.3]{bleher17topological} explicitly in terms of exponentials and $\text{erfc}(z)$. $\boldsymbol C$ is holomorphic in $\C \setminus \R$, satisfies the jump relation 
\[
\boldsymbol C_+(s) = \boldsymbol C_-(s) \left( \begin{matrix}
	1 & 1 \\
	0 & 1
\end{matrix}\right),
\]
and has the asymptotic expansion 
$
\label{c-asymp}
\boldsymbol C(\zeta) \sim \left(  \boldsymbol I + \sum_{k = 0}^{\infty} \left( \begin{matrix}
	0 & b_k \\ 0 & 0 
\end{matrix}\right) \zeta^{-(2k + 1)} \right) e^{- \zeta^2 \sigma_3}.
$ Let
\begin{align*}
\boldsymbol P_c(z) := \boldsymbol E_c(z) \boldsymbol C \left(\sqrt{n/2} \cdot \zeta_c(z) \right) \boldsymbol J^{-1}(z) r^{-\sigma_3} e^{-n \phi(z)\sigma_3/2}, \qquad \boldsymbol E_c(z) := \boldsymbol N(z) r^{\sigma_3}(z) \boldsymbol J(z).
\end{align*}
$\boldsymbol P_c$ satisfies \hyperref[rhp-kissing-crit]{\rhpc}(a, b) for any $\boldsymbol E_c(z)$ holomorphic in $U_c$. Furthermore, by the very definition of $\boldsymbol C, \boldsymbol J, r$, it follows that $\boldsymbol P_c$ is bounded as $z \to z^*$. 
Since the matrices involved in its definition are holomorphic in $U_c$, $\boldsymbol E_c(z)$ is holomorphic in $U_c$. \hyperref[rhp-kissing-crit]{\rhpc}(d) follows from the behavior of $\boldsymbol C(\zeta)$ as $\zeta \to \infty$ \cite[Equation (7.19)]{bleher17topological}, that $\phi_c(z^*) \in \ic \R$, and the relation 
\[
e^{-n\phi(z)\sigma_3/2} \left( \begin{matrix} 0 & -1 \\ 1 & 0  \end{matrix} \right) = \left( \begin{matrix} 0 & -1 \\ 1 & 0  \end{matrix} \right) e^{n\phi(z)\sigma_3/2}.
\] 
%
\section*{Acknowledgments }
The author is grateful to Maxim Yattselev for his guidance and the many useful discussions, suggestions, and comments. The author would also like to thank Alfredo Dea\~no and Guilherme Silva for their help, support and encouragement.

\end{document}